\documentclass[11pt,letterpaper]{amsart}

\usepackage[utf8]{inputenc}
\usepackage[english]{babel}
\usepackage{amsmath,amssymb,amsthm,mathrsfs}
\usepackage[colorlinks=true,linkcolor=blue,citecolor=blue,urlcolor=blue]{hyperref}
\usepackage{comment}

\newtheorem{thm}{Theorem}[section]

\newtheorem{lem}[thm]{Lemma}

\theoremstyle{definition}
\newtheorem{defn}[thm]{Definition}

\newtheorem{conj}[thm]{Conjecture}
\theoremstyle{remark}
\newtheorem{rmk}[thm]{Remark}

\newcommand{\tr}{\operatorname{tr}}
\newcommand{\diver}{\operatorname{div}}

\newcommand{\mP}{m_{\mathbf P}}

\newcommand{\Pten}{\mathbf P}

\newcommand{\kten}{\mathbf k}

\title[Penrose conjecture under $2$-convexity condition]
{The Penrose conjecture for initial data sets satisfying a $2$-convexity condition}
\author{Conghan Dong}
\address{Department of Mathematics, Duke University, 120 Science Dr, Durham, NC 27710, USA}
\email{conghan.dong@duke.edu}
\date{\today}

\begin{document}

\begin{abstract}
	Let $(M^3, g, \mathbf{k})$ be a smooth, connected, asymptotically flat initial data set with connected outermost past apparent horizon $\Sigma $. We prove the Penrose conjecture, namely that $m_{\mathrm{ADM}}(g) \geq \sqrt{\frac{|\Sigma |}{16 \pi }} $, under the assumptions of the dominant energy condition and the $2$-convexity condition that the sum of the two smallest eigenvalues of $\kten$ is nonnegative. The main tool is the $\sigma $-inverse mean curvature flow, together with a monotonicity formula developed in \cite{Dong26SigmaIMCF}.
\end{abstract}

\maketitle 

\section{Introduction and main results}

Let \((M^3,g,\kten)\) be a smooth, connected, asymptotically flat initial data set.  We use the constraint
conventions
\begin{align}\label{eq:constraints}
  16\pi\mu
  &=R+(\tr_g\kten)^2-|\kten|^2,
  &
  8\pi J
  &=\diver_g\big(\kten-(\tr_g\kten)g\big),
\end{align}
where \(R\) is the scalar curvature of \(g\).  The dominant energy condition
(DEC) is
\begin{align}\label{eq:DEC}
  \mu\geq |J|_g.
\end{align}

A past apparent horizon satisfies $H_{\Sigma } = \mathrm{tr}_{\Sigma } \mathbf{k}$, whereas a future apparent horizon satisfies $H_{\Sigma } = - \mathrm{tr}_{\Sigma } \kten$. Here $H_{\Sigma }$ is the mean curvature of $\Sigma $ in $(M^3, g)$, and $\mathrm{tr}_{\Sigma } \kten$ is the trace of $\kten$ restricted to $\Sigma $. Reversing the time orientation by replacing $\kten$ with $- \kten$ turns
 a past apparent horizon into a future apparent horizon. A generalized apparent horizon in the sense of
\cite{BrayKhuri11} satisfies
 $H_{\Sigma }=|\tr_\Sigma\kten|.$

A standard version of the Penrose conjecture, in terms of past apparent horizons, can be stated as follows.

\begin{conj}
	Suppose that $(M^3, g, \kten)$ is complete, connected, asymptotically flat initial data set satisfying the dominant energy condition (\ref{eq:DEC}), and let $m$ be its ADM mass. If $\Sigma $ is a past apparent horizon, then
	\begin{align*}
		m \geq \sqrt{\frac{A}{16 \pi }} ,
	\end{align*} 
	where $A$ is the area of the outermost minimal area enclosure of $\Sigma $. 
\end{conj}

When $\mathbf{k}=0$, the Penrose conjecture reduces to the Riemannian Penrose inequality, which was proved by Huisken-Ilmanen \cite{HI01} in the case of a connected horizon and by Bray \cite{Bray01} in full generality. Beyond this setting, several special cases are also known. In particular, the spherically symmetric case has been established; see \cite{Hayward96, IMM96, BrayKhuri10Jang}. For other  recent results towards the general Penrose conjecture, see \cite{AllenBrydenKazarasKhuriPenrose, AnHeKerrPenrose, EllithyPenrose} and the reference therein. 

The Penrose conjecture for another large class of initial data sets, in which $\kten$ is proportional to $g$, was recently proved in \cite{Dong26SigmaIMCF}. In that work, the author introduced a general $\sigma $-inverse mean curvature flow and derived a new monotonicity formula. In this paper, we further apply those techniques to study the initial data sets satisfying the $2$-convexity condition.

More precisely, if $\kappa_3 \geq  \kappa_2 \geq \kappa_1$ are the eigenvalues of $\kten$ with respect to $g$, then the $2$-convexity condition of $\kten$ is given by
$$\kappa_1 + \kappa_2 \geq 0.$$
In particular, it's the intermediate case between $\kten \geq 0$ and $\mathrm{tr}_g \kten \geq 0$.

Define the tensor
\begin{align}\label{eq:def-P}
  \Pten :=(\tr_g\kten)g-\kten.
\end{align}
The $2$-convexity condition of $\kten$ is equivalent to 
$$\Pten \geq 0 .$$

Since
\begin{align*}
	|\Pten|^2 = |\kten|^2 + (\mathrm{tr}_g \kten)^2,\ \mathrm{tr}_g\Pten = 2 \mathrm{tr}_g \kten,
\end{align*} 
in terms of $\Pten$, the DEC (\ref{eq:DEC}) becomes
\begin{align}\label{eq:DEC2}
	R \geq |\Pten|^2 - \frac{1}{2} (\mathrm{tr}_g \Pten)^2 + 2 |\mathrm{div}_g \Pten|.
\end{align}

In this paper, we will be interested in the case in which $\Pten \geq 0$ globally as a tensor. If $\Pten \leq 0$,  then one can reduce to the nonnegative case by considering the past apparent horizon of the initial data set $(M^3, g, - \mathbf{k})$, which is the same as the future apparent horizon of original initial data set $(M^3, g, \kten)$. 

For every two-sided surface \(\Sigma\) with unit normal $\nu $,
\begin{align}\label{eq:P-basic}
  \Pten (\nu,\nu)=\tr_\Sigma\kten.
\end{align}
If $\Pten \geq 0$, then
\(\tr_\Sigma\kten\geq 0\), and the past apparent horizon equation coincides with the generalized apparent horizon equation, namely \(H=\Pten(\nu,\nu)\).

Our main result is the following:
\begin{thm}\label{thm-main}
	Suppose that $(M^3, g, \kten)$ is complete, connected, asymptotically flat initial data set satisfying the dominant energy condition (\ref{eq:DEC}), and let $m$ be its ADM mass. Assume the $2$-convexity condition that $(\mathrm{tr}_{g} \kten) g - \kten \geq 0$. If $\Sigma = \partial M $ is a connected outermost past apparent horizon, then
	\begin{align*}
		m \geq \sqrt{\frac{\mathrm{Area}(\Sigma )}{16 \pi }} .
	\end{align*} 
	Equality holds if and only if $\kten \equiv 0$ and $(M^3, g)$ is isometric to the standard Schwarzschild manifold.
\end{thm}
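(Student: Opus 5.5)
We will run a weak $\sigma$-inverse mean curvature flow adapted to $\Pten$, starting at $\Sigma$, and show that a suitably modified Hawking mass is monotone along it. Concretely, we consider the flow of surfaces $\Sigma_t$ moving outward with speed
\[
\frac{1}{H-\Pten(\nu,\nu)},
\]
which is the $\sigma$-IMCF of \cite{Dong26SigmaIMCF} with $\sigma=\Pten(\nu,\nu)=\tr_{\Sigma_t}\kten$. The initial condition is natural: the past horizon equation gives $H-\Pten(\nu,\nu)=0$ on $\Sigma$, exactly as $H=0$ on a minimal surface in Huisken--Ilmanen. The area growth is still $|\Sigma_t|=e^{t}|\Sigma|$, because $\frac{d}{dt}|\Sigma_t|=\int H/(H-\sigma)$ is replaced by the level-set formulation in which $u$ solves $\diver(\nabla u/|\nabla u|)=|\nabla u|+\Pten(\nabla u/|\nabla u|,\nabla u/|\nabla u|)$. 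We will construct the weak solution by elliptic regularization, as in \cite{HI01,Dong26SigmaIMCF}. Since $\Pten\ge0$ and $\Sigma$ is outermost, the Huisken--Ilmanen jump/minimizing-hull arguments still apply, and $\Sigma_t$ stays connected and outward-minimizing in the appropriate $\Pten$-weighted sense. The weak existence result and the monotonicity formula for general $\sigma$ are taken from \cite{Dong26SigmaIMCF}.

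The core step is the monotonicity computation. We set
\[
m(t)=\sqrt{\frac{|\Sigma_t|}{16\pi}}\Bigl(1-\frac{1}{16\pi}\int_{\Sigma_t}(H-\Pten(\nu,\nu))^2\Bigr)
\]
(or the variant from \cite{Dong26SigmaIMCF}). Differentiating uses the Gauss equation, Gauss--Bonnet on the connected $\Sigma_t$, and the formula for $\partial_t H$. Writing $\Pten(\nu,\nu)$ and its derivatives in terms of $\Pten$, $\nabla\Pten$ and the second fundamental form, one obtains
\[
m'(t)\ \ge\ c\int_{\Sigma_t}\Bigl(R-|\Pten|^2+\tfrac12(\tr\Pten)^2-2|\diver\Pten|\Bigr)+\text{(quadratic terms)}.
\]
The first group is nonnegative by \eqref{eq:DEC2}. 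The terms involving $\nabla\Pten$ are converted to $\diver\Pten$ by integrating by parts on $\Sigma_t$, using $\diver_\Sigma$ of the tangential part $\Pten(\nu,\cdot)^T$. The remaining quadratic terms in $\Pten$, $A$ and $\sigma$ must be shown nonnegative, and this is exactly where $\Pten\ge0$ enters: terms such as $\Pten(e_i,e_i)\,h_{jj}$-type cross terms and $\sigma(H-\sigma)$ have a sign when $\Pten\ge0$. I expect this algebraic step to be the main obstacle. The plan is to diagonalize $A$, split $\Pten$ into its tangential block, mixed part and normal part, and complete squares, using the positive semidefiniteness of the $3\times3$ matrix to control the mixed components $\Pten(\nu,e_i)$ via $\Pten(\nu,\nu)\Pten(e_i,e_i)\ge \Pten(\nu,e_i)^2$.

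With monotonicity in hand, the conclusion follows. At $t=0$ we have $H=\Pten(\nu,\nu)$ on $\Sigma$, so $m(0)=\sqrt{|\Sigma|/16\pi}$. As $t\to\infty$, asymptotic flatness and the decay of $\kten$ make $\Pten(\nu,\nu)=O(r^{-2})$, so $m(t)$ is asymptotically the Hawking mass, and $\limsup m(t)\le m$ by the Huisken--Ilmanen asymptotic argument. For rigidity, equality forces every term in $m'$ to vanish. This gives $R=|\Pten|^2-\frac12(\tr\Pten)^2$ and $\diver\Pten=0$, the vanishing of the squares (which forces $\Pten|_{\Sigma_t}$-components and $\sigma$ to vanish, hence $\Pten\equiv0$ and so $\kten\equiv0$), and umbilicity. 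The case of equality of Huisken--Ilmanen then gives Schwarzschild.
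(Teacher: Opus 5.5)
Your architecture matches the paper's: weak $\Pten$-IMCF from $\Sigma$, the quantity $\mP$, the DEC in the form \eqref{eq:DEC2}, trading $\nabla\Pten$ for $\diver\Pten$ via $\diver_{N_t}\bigl(\Pten(\nu,\cdot)^T\bigr)$, asymptotic comparison, and rigidity via $\Pten\equiv0$ followed by Huisken--Ilmanen. However, there are two genuine problems.

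\textbf{The area growth claim is false.} The claim $|\Sigma_t|=e^t|\Sigma|$ does not hold. With speed $1/(H-\Pten(\nu,\nu))$ one has $\frac{d}{dt}|N_t|=|N_t|+\int_{N_t}\frac{\Pten(\nu,\nu)}{H-\Pten(\nu,\nu)}$, and the level-set formulation does not change this. Across a jump, Lemma~\ref{lem-optimizing-property}(iii) even adds $\int_{E_t^+\setminus E_t}\Pten(\nu,\nu)$. The paper accordingly writes $\mP=\sqrt{A/16\pi}\,B$ with $A=e^{-t}|N_t|$ and $A'=e^{-t}\int_{N_t}\Pten(\nu,\nu)/(H-\Pten(\nu,\nu))\ge0$. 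Since $B$ is nondecreasing with $B(0)=1$, we get $B\ge1$ and the product is monotone. The error matters in two places.
\begin{itemize}
\item In the variation of $\int(H-\Pten(\nu,\nu))^2$, an $e^t$-growing area element would replace the correct term $-\int H(H-\Pten(\nu,\nu))$ in $16\pi e^{-t/2}B'$ by $-\int(H-\Pten(\nu,\nu))^2$. This overestimates it by $\int\Pten(\nu,\nu)(H-\Pten(\nu,\nu))\ge0$, which is the unsafe direction.
\item In the equality case, $\Pten(\nu,\nu)=0$ comes precisely from $A'=0$. The $B'$ terms only give $\tr_{N_t}\Pten=0$. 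Together with $\Pten\ge0$ this still allows $\Pten=\lambda\,\nu\otimes\nu$, i.e.\ $\kten=\frac{\lambda}{2}g-\lambda\,\nu\otimes\nu$ with $\lambda=\Pten(\nu,\nu)$. So ``vanishing of the squares'' does not by itself give $\kten\equiv0$.
\end{itemize}

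\textbf{The central inequality is left open.} The route you sketch (diagonalizing $\mathrm{II}$, bounding $\Pten(\nu,e_i)^2\le\Pten(\nu,\nu)\Pten(e_i,e_i)$) is not what closes it. The paper proceeds as follows.
\begin{itemize}
\item Completing the square with the gradient term yields $2\bigl|\nabla^N(H-\Pten(\nu,\nu))/(H-\Pten(\nu,\nu))+\Pten^{TN}(\nu,\cdot)\bigr|^2-2|\Pten^{TN}(\nu,\cdot)|^2$.
\item Identity \eqref{nabla-P-id} turns $2(\nabla_\nu\Pten)(\nu,\nu)$ into $2(\diver\Pten)(\nu)+H\tr_g\Pten-3H\Pten(\nu,\nu)+2\langle\mathring{\Pten}^{TN},\mathring{\mathrm{II}}\rangle$, up to a divergence.
\item That last term combines with $|\mathring{\mathrm{II}}|^2$ into $|\mathring{\mathrm{II}}+\mathring{\Pten}^{TN}|^2-|\mathring{\Pten}^{TN}|^2$.
\item The splitting $|\Pten|^2=|\mathring{\Pten}^{TN}|^2+\frac12(\tr_{N_t}\Pten)^2+2|\Pten^{TN}(\nu,\cdot)|^2+\Pten(\nu,\nu)^2$ lets \eqref{eq:DEC2} absorb the negative mixed and trace-free terms exactly.
\end{itemize}
The remainder is $16\pi(\mu-|J|_g)+(H-\Pten(\nu,\nu))\tr_{N_t}\Pten$. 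This is $\ge0$ because $H>\Pten(\nu,\nu)$ along the flow and $\Pten\ge0$. Thus $\Pten\ge0$ enters the monotonicity only through $\Pten(\nu,\nu)\ge0$ (in $A'$) and $\tr_{N_t}\Pten\ge0$ (in $B'$). Identifying this remainder is the actual content of the proof, and your proposal stops before it.

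Lastly, \cite{Dong26SigmaIMCF} supplies weak existence, but its weak monotonicity argument used $\sigma$ proportional to $g$. For $\sigma=\Pten$, the convergence of $(\tilde\nabla_{\tilde\nu_i}\Pten)(\tilde\nu_i,\tilde\nu_i)$ along the translating graphs must be redone. The paper does this with the same divergence identity together with $\Pten(e_z,\cdot)=0$.
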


\begin{rmk}
	The ADM mass $m$ is defined in (\ref{defn-mass}),  which is called the ADM energy in the literature. Under the asymptotical flat assumption (\ref{AF-condition-0}), the $2$-convexity assumption will force zero ADM linear momentum, so that the ADM energy equals the mass.
\end{rmk}

\begin{rmk}
	The class of initial data sets $(M^3, g, \kten)$ satisfying DEC and $2$-convexity condition contains a large class of nontrivial examples. The intersection of this class $\{\kten: (\mathrm{tr}_{g} \kten) g - \kten \geq 0\} $ and the proportional class $\{ \kten: \kten = \frac{\tau }{3} g\} $ is given by $\{\kten: \kten = \frac{\tau }{3} g, \tau  \geq 0\} $, which already contains a large class of nontrivial examples satisfying DEC.
 See \cite[Examples 2.1]{Dong26SigmaIMCF}.
\end{rmk}
\begin{rmk}
	Under the $2$-convexity condition, the standard Penrose conjecture is equivalent to the generalized Penrose conjecture. If one assumes the boundary of $M$ consists of past trapped surfaces, i.e. $H_{\Sigma } \leq \mathrm{tr}_{\Sigma } \kten$, then the existence of the unique $C^{2,\alpha }$-smooth outermost past apparent horizon is given by \cite{Eichmair10}.
\end{rmk}

The main technique used to prove the theorem is the $\sigma $-inverse mean curvature flow developed in \cite{Dong26SigmaIMCF}, with $\sigma = \Pten$:
\begin{align}\label{eq:P-IMCF}
  \frac{\partial F}{\partial t}=\frac{\nu}{H - \Pten (\nu , \nu )},
\end{align}
where $F: N^2 \times [0, T] \to M^3$ is a family of hypersurfaces $N_t := F(N, t)$. 
 For a smooth closed leaf
 \(N_t\),  we follow \cite{Dong26SigmaIMCF} and define the quantities
\begin{align}\label{eq:mass-quantities}
  A(t)
  &:=e^{-t}|N_t|,
  \\
  B(t)
  &:=e^{t/2}\left(
     1-\frac{1}{16\pi}\int_{N_t}\left( H- \Pten (\nu ,\nu ) \right) ^2
  \right),
  \\
  \mP(N_t)
  &:=\sqrt{\frac{A(t)}{16\pi}}\,B(t)
   =\sqrt{\frac{|N_t|}{16\pi}}
     \left(1-\frac{1}{16\pi}
     \int_{N_t}\left( H - \Pten(\nu ,\nu ) \right) ^2 \right).
\end{align}
Under the DEC (\ref{eq:DEC}) and the assumption that $\Pten \geq 0$, one shows that $A(t)$ and $ B(t)$ are monotone nondecreasing along smooth solutions of (\ref{eq:P-IMCF}) as long as $N_t$ remains connected. Hence $\mP(N_t)$ is monotone nondecreasing whenever $B(t) \geq 0$. Furthermore, one proves that $\lim_{t\to \infty} m_{\Pten}(N_t) \leq m $.

The weak theory of the flow (\ref{eq:P-IMCF}) has been studied in \cite{Dong26SigmaIMCF}. A related weak existence theory for inverse null mean curvature flow were developed in \cite{Moore12}, in which the flow has a different form $\frac{\partial F}{\partial t} = \frac{\nu }{H+ \Pten(\nu ,\nu )}$ under the assumption $\mathrm{tr}_{g} \kten \geq 0$. See also \cite{HuiskenWolff22} for a related inverse space-time mean curvature flow in the maximal case $\mathrm{tr}_{g} \kten =0$. Thus, it remains to extend the monotonicity formula to weak solutions. This follows from modifications of \cite{Dong26SigmaIMCF}.  The main result follows by starting the weak flow from the connected outermost past apparent horizon.  

We organize this paper as follows. In Section \ref{section-preliminary}, we recall preliminary results from \cite{Dong26SigmaIMCF} and indicate the necessary modifications. In Section \ref{section-monotone}, we prove the monotonicity formula along smooth solutions. The remainder of the proof of the main theorem then follows the argument of \cite{Dong26SigmaIMCF}. 

\subsection*{Acknowledgement} I would like to thank Hubert Bray for helpful discussions and for suggesting the convexity condition. I also thank Marcus Khuri for helpful comments.

\section{Preliminaries}\label{section-preliminary}
\subsection{Definition of initial data sets}
An initial data set for a spacetime consists of a triple $(M^3, g, \mathbf{k})$, where $M$ is a three-dimensional manifold, $g$ is a Riemannian metric, and $\mathbf{k}$ is a symmetric $(0,2)$-tensor. 

Following \cite{SchoenYau81}, an initial data set $(M, g, \mathbf{k})$ is called asymptotically flat if, outside a compact set, $M$ decomposes into finitely many ends $M_1, \ldots, M_{p}$, each diffeomorphic to the complement of a compact subset of $\mathbb{R}^{3}$, and under such diffeomorphisms, the tensors $g$ and $\mathbf{k}$ satisfy the decay conditions
\begin{align}\label{AF-condition-0}
	\begin{split}
		&|g_{ij} - \delta _{ij}| + |x| \cdot |\partial g_{ij}| + |x|^2 \cdot |\partial ^2 g_{ij}| \leq C |x| ^{-1},\\
	& |R| + |x| \cdot |\partial R| \leq C |x|^{-4},\\
	& |\mathbf{k}_{ij}| + |x|\cdot  |\partial \mathbf{k}_{ij}|  + |x|\cdot  \left| \textstyle \sum_{i} \mathbf{k}_{ii}\right|  \leq C |x|^{-2},
	\end{split}
\end{align} 
as $|x|\to \infty$, where norms and derivatives are taken with respect to the Euclidean metric. 

With each end $M_k$ we associate the ADM mass $m_k$ defined by the flux integral
\begin{align}\label{defn-mass}
	m_k = \frac{1}{16 \pi } \lim_{r\to \infty}   \int_{S_r} \sum_{i,j}^{}\left( g_{ij,j} - g_{jj,i} \right) n ^{i} dA,
\end{align} 
where $S_r= \{x: |x|=r\} $ is the Euclidean $2$-sphere at infinity. 

For convenience, we modify the topology of $M$ by compactifying all of the ends of $M$ except for one chosen end. We denote by $m= m(g)$ the ADM mass of the chosen end.

\subsection{Classical solution of $\sigma $-IMCF}
Let $\sigma $ be a smooth nonnegative symmetric $2$-tensor.  Let $F: N^{n-1} \times [0, T] \to M^{n}$ be a family of hypersurfaces $N_t:= F(N, t)$. Following \cite{Dong26SigmaIMCF}, we say $F$ is a classical solution of the \textit{$\sigma $-inverse mean curvature flow} (or $\sigma $-IMCF) if the following parabolic evolution equation holds in the classical sense:
\begin{align}\label{*-eq}
	\frac{\partial F}{\partial t}(x,t) = \frac{\nu }{H- |\nu |^2_{\sigma }}(x,t),\quad x \in N,\ 0 \leq t \leq T, \tag*{$(*)$}
\end{align} 
where $H$ is the mean curvature of $N_t$ at the point $F(x, t)$, $\nu $ is the outward unit normal, $|\nu |^2_{\sigma }= \sigma _{ij} \nu ^{i} \nu ^{j} \geq 0$, $H-|\nu |^2_{\sigma }$ is assumed to be positive, $\frac{\partial F}{\partial t}$ denotes the normal velocity along the surface $N_t$, and all derivatives and geometric quantities are well defined. 

By \cite[Section 3]{Dong26SigmaIMCF}, we have
\begin{lem}\label{evolution-eq-classical}
Consider the classical solutions of \ref{*-eq}. For any smooth cutoff function $\phi $ so that $\mathrm{supp} \phi \cap \partial N_{t} = \emptyset$, we have
\begin{align}\label{ddt-phi-area}
	\begin{split}
		\frac{d}{d t} \int_{N_t} \phi &= \int_{N_t} \frac{\nabla _{\nu } \phi + \phi H}{H-|\nu |^2_{\sigma }},
	\end{split}
\end{align}
and
 \begin{align}\label{dt-int-H-nu}
	\begin{split}
		&\frac{d}{d t} \int_{N_t} \phi (H- |\nu |^2_{\sigma })^2 \\
									&= \int_{N_t} (H-|\nu |^2_{\sigma }) \nabla _{\nu }\phi -2 \langle \nabla ^{N} \phi , \frac{\nabla ^{N}(H-|\nu |^2_{\sigma })}{H- |\nu |^2_{\sigma }} \rangle \\
									&\ \ - 2 \int_{N_t}\phi \left(   \frac{|\nabla ^{N} (H-|\nu |^2_{\sigma })|^2}{(H-|\nu |^2_{\sigma })^2} + \left( \mathrm{Ric}(\nu ,\nu ) +|\mathrm{II}|^2 \right) \right) \\
									&\ \ -2 \int_{N_t} \phi (\nabla _{\nu } \sigma) (\nu , \nu ) - 4 \int_{N_t} \phi \frac{\sigma ( \nu , \nabla ^{N}(H- |\nu |^2_{\sigma }) )}{H- |\nu |^2_{\sigma }} + \int_{N_t}\phi H(H-|\nu |^2_{\sigma }).
	\end{split}
\end{align}

\end{lem}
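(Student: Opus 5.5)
The plan is to derive both identities from the standard first-variation formulas for a hypersurface moving with normal speed $f$. Here we take $f = 1/u$, with $u := H - |\nu|^2_\sigma > 0$. The assumption $\mathrm{supp}\,\phi \cap \partial N_t = \emptyset$ is used only to discard boundary terms when integrating by parts on $N_t$.

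Concretely, for $\partial_t F = f\nu$ I will use the following classical facts.
\begin{itemize}
\item The area element evolves by $\partial_t d\mu = fH\, d\mu$.
\item A fixed ambient function satisfies $\partial_t(\phi\circ F) = f\nabla_\nu\phi$.
\item The unit normal satisfies $\partial_t \nu = -\nabla^N f$.
\item The mean curvature satisfies $\partial_t H = -\Delta_N f - (\mathrm{Ric}(\nu,\nu) + |\mathrm{II}|^2) f$.
\end{itemize}
The first formula \eqref{ddt-phi-area} is then immediate, since $\frac{d}{dt}\int_{N_t}\phi = \int_{N_t} f\nabla_\nu\phi + \phi f H$.

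For the second formula I first compute the evolution of $|\nu|^2_\sigma = \sigma(\nu,\nu)$. Differentiating along the flow gives
\[
\partial_t |\nu|^2_\sigma = f(\nabla_\nu\sigma)(\nu,\nu) + 2\sigma(\nu,\partial_t\nu) = f(\nabla_\nu\sigma)(\nu,\nu) - 2\sigma(\nu,\nabla^N f).
\]
Since $\nabla^N f = -\nabla^N u/u^2$, this yields
\[
\partial_t u = -\Delta_N(1/u) - \frac{\mathrm{Ric}(\nu,\nu)+|\mathrm{II}|^2}{u} - \frac{(\nabla_\nu\sigma)(\nu,\nu)}{u} - \frac{2\sigma(\nu,\nabla^N u)}{u^2}.
\]

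Next I expand
\[
\frac{d}{dt}\int_{N_t}\phi u^2 = \int_{N_t} \frac{\nabla_\nu\phi}{u}\,u^2 + 2\phi u\,\partial_t u + \phi u^2\frac{H}{u}.
\]
The first and last terms give $\int u\nabla_\nu\phi$ and $\int \phi H u$, which match the corresponding terms in \eqref{dt-int-H-nu}.

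In the middle term, the only nontrivial piece is $-2\int \phi u\,\Delta_N(1/u)$. Integrating by parts (no boundary terms, by the support assumption) gives
\[
2\int\langle \nabla^N(\phi u), \nabla^N(1/u)\rangle = -2\int \Big\langle\nabla^N\phi, \frac{\nabla^N u}{u}\Big\rangle - 2\int \phi\frac{|\nabla^N u|^2}{u^2}.
\]
The remaining pieces of $2\phi u\,\partial_t u$ contribute exactly
\[
-2\phi(\mathrm{Ric}(\nu,\nu)+|\mathrm{II}|^2) - 2\phi(\nabla_\nu\sigma)(\nu,\nu) - 4\phi\,\sigma(\nu,\nabla^N u)/u.
\]
Collecting all terms reproduces \eqref{dt-int-H-nu}.

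The only step requiring care is the $\sigma$-term. One must treat $\sigma$ as an ambient tensor, so that its variation contributes $\nabla_\nu\sigma$ plus the rotation of $\nu$. One also has to track signs consistently with the convention $\partial_t\nu = -\nabla^N f$ for the outward normal. Everything else is the standard computation from the classical inverse mean curvature flow, with $H$ replaced by $u$.
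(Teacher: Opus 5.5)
Your derivation is correct. With $f=1/u$ and $u=H-|\nu|^2_\sigma$, your evolution equation for $u$ and the single integration by parts reproduce every term of the second identity, including the factor $4$ in the $\sigma(\nu,\nabla^N u)$ term; the paper gives no argument of its own here and simply quotes \cite[Section 3]{Dong26SigmaIMCF}, where the identities come from the same standard first-variation computation you carry out.
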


\subsection{Weak solution of $\sigma $-IMCF}
The level-set description of the evolution by $\sigma $-IMCF can be formulated as follows. We assume that the evolving surfaces are given by the level-sets of a function $u: M \to \mathbb{R}$ via
\begin{align*}
	E_t:= \{ x: u(x) <t\} ,\quad N_t := \partial E_t. 
\end{align*}
When $u$ is smooth with $\nabla u \neq 0$, equation \ref{*-eq} is equivalent to 
\begin{align}\label{*2-eq}
	\mathrm{div}\left( \frac{\nabla u}{|\nabla u|} \right) = |\nabla u| + \frac{\sigma _{ij} \nabla ^{i} u \nabla ^{j}u}{|\nabla u|^2}, \tag*{$(* *)$}
\end{align}
where the left side is the mean curvature of $\{ u = t\} $ and $|\nabla u|$ is the inverse speed.

Following \cite[Section 4]{Dong26SigmaIMCF}, by freezing the $|\nabla u| + \sigma (\nu ,\nu )$ term on the right-hand side of \ref{*2-eq}, for compact subset $K \subset M$, we consider equation \ref{*2-eq} as the Euler-Lagrange equation of the functional
\begin{align*}
	J_{u, \nu }^{\sigma }(v) = J_{u, \nu }^{\sigma ,K}(v) := \int_{K}\left( |\nabla v| + v (|\nabla u| + \sigma (\nu ,\nu )) \right).
\end{align*} 
Alternatively, we define the functional
\begin{align*}
	J_{u, \nu }^{\sigma }(F) = J_{u, \nu }^{\sigma ,K}(F):= |\partial ^* F \cap K| - \int_{F \cap K} \left( |\nabla u| + \sigma (\nu ,\nu ) \right) ,
\end{align*} 
for sets $F$ of locally finite perimeter.

\begin{defn}\label{defn-weak}
	Let $E_0 \subset M$ be a precompact open set with $C^2$-boundary $N_0= \partial E_0$, and $\sigma $ be a Lipschitz nonnegative symmetric $2$-tensor. We call the pair $(U, \tilde{\nu })$ a \textit{weak solution} of \ref{*2-eq} with initial condition $E_0$ if $U \in C^{0,1}_{loc}(M \times \mathbb{R})$ and $\tilde{\nu } $ is a measurable unit vector field which satisfy
	\begin{itemize}
		\item [(i)] $U$ is translation invariant in the vertical direction in the sense that $U(x,z) = u(x)$ for a locally Lipschitz function $u: M \to \mathbb{R}$ satisfying $u \geq 0$ on $M \setminus E_0$, $u|_{\partial E_0} =0$, $u<0$ in $E_0$, and $u(x) \to \infty$ as $d(x, E_0) \to \infty$.
		\item [(ii)] $\tilde{E}_t := \{ U< t\} $ minimizes $J_{U, \tilde{\nu }} ^{\sigma }$ in $(M\setminus \bar{E}_0) \times \mathbb{R}$ for each $t>0$, and $\sigma $ the translation invariant extension. At each jump time $t_0$, each point $\tilde{x}_0=(x_0, z_0)$ in the interior $\tilde{\mathcal{K}}_{t_0}$ of the jump region $\{U = t_0\} $ lies in the boundary $\partial \tilde{E}_{\tilde{x}_0} \in C^{1,\alpha }_{loc}$ of a Caccioppoli set $\tilde{E}_{\tilde{x}_0}$ that minimizes $J_{U, \tilde{\nu }}$ in $\tilde{\mathcal{K}}_{t_0}$.
		\item [(iii)] $\tilde{\nu }$ is a translation invariant in the sense that $\tilde{\nu }(x, z_1) = \tilde{\nu} (x, z_2)$ for any $z_1, z_2 \in \mathbb{R}$, $\tilde{\nu }(\tilde{x}) \in C^{\alpha }_{loc}$ away from jump times and is the unit normal vector to $\partial \tilde{E}_t$ at each point $\tilde{x} \in \partial \tilde{E}_t$, and $\tilde{\nu }(\tilde{x}) \in C^{1,\alpha }_{loc}(\tilde{\mathcal{K}}_{t_0}) $ is the unit normal vector to $\partial \tilde{E}_{\tilde{x}_0}$ at each point $\tilde{x} \in \partial \tilde{E}_{\tilde{x}_0}$ and each jump time $t_0$.
	\end{itemize}
\end{defn}

To obtain the existence of weak solutions, one consider the $\epsilon $-translating graphs as follows.

For any $L \gg 1$, by \cite[Lemma 4.1]{Dong26SigmaIMCF}, there exists a function $u ^{\epsilon }$, where $\epsilon = \epsilon (L) \to 0$ as $L\to \infty$ so that for
\begin{align}
	U ^{\epsilon }(x,z):= u ^{\epsilon }(x) - \epsilon z,\quad (x,z) \in \Omega _{L} \times \mathbb{R},
\end{align}
the following equation holds:
\begin{align}
	\tilde{\mathcal{E}}(U^{\epsilon }):= \mathrm{div}_{\tilde{g}}\left( \frac{\tilde{\nabla } U^{\epsilon }}{|\tilde{\nabla } U^{\epsilon }|} \right) - |\tilde{\nabla }U ^{\epsilon }| - \frac{\sigma _{ij} \tilde{\nabla }^{i} U^{\epsilon } \tilde{\nabla }^{j} U^{\epsilon }}{|\tilde{\nabla }U^{\epsilon }|^2} = 0 \text{ in } \Omega _{L} \times \mathbb{R},
\end{align}
where $\tilde{g}= g + dz ^2$ is the product metric, $\sigma (x,z) = \sigma (x)$ is the constant extension along the $z$-direction, and $\Omega _{L} \to M^3$ as $L\to \infty$. So $U^{\epsilon }$ is a smooth solution of \ref{*2-eq} in $\Omega _{L} \times \mathbb{R}$.  For any $-\infty< t< \infty$, set
\begin{align*}
	\tilde{N}^{\epsilon }_t := \{ U^{\epsilon }= t\} = \mathrm{graph} \left( \frac{u^{\epsilon }}{\epsilon } - \frac{t}{\epsilon } \right) .
\end{align*} 
Then $\tilde{N}^{\epsilon }_{t}$ is a smooth solution of the $\sigma $-IMCF \ref{*-eq} in $\Omega _{L} \times \mathbb{R}$:
\begin{align*}
	\frac{\partial }{\partial t} \tilde{F} = \frac{\tilde{\nu }_{\epsilon }}{\tilde{H} - |\tilde{\nu }_{\epsilon }|^2_{\sigma }},
\end{align*}
where $\tilde{\nu }_{\epsilon } = \frac{\tilde{\nabla }U^{\epsilon }}{|\tilde{\nabla }U^{\epsilon }|}$ is the smooth downward unit normal vector. 
Notice that $\tilde{N}^{\epsilon }_{t}$ has boundary consisting of translations of $\partial \Omega _{L}$. 

Using the local uniform Lipschitz estimates together with the Arzela-Ascoli theorem, and by regularity results of geometric measure theory, one can show that there exists $L_i \to \infty, \epsilon _i \to 0$ so that
\begin{align*}
	U ^{\epsilon _i}(x,z) \to U(x,z) = u(x)
\end{align*} 
locally uniformly, $\tilde{\nu }_{\epsilon _i} \to \tilde{\nu }$ a.e., and the limit $(U, \tilde{\nu })$ provides a weak solution of the $\sigma $-IMCF. More precisely, we have the following existence theorem.

\begin{thm}[{\cite[Theorem 4.9]{Dong26SigmaIMCF}}]\label{sgm-IMCF-existence}
	Let $(M^n, g)$ be a complete, connected Riemannian $n$-manifold without boundary, and let $\sigma $ be a smooth nonnegative symmetric $2$-tensor with a uniform global $C^{0,1}$-bound. Suppose there exists a proper smooth subsolution of \ref{*2-eq} with a precompact initial condition. Then for any nonempty, precompact, open set $E_0 \subset M^n$ with $C^2$-boundary, there exists a weak solution of the $\sigma $-IMCF with initial condition $E_0$ as in Definition \ref{defn-weak}.
\end{thm}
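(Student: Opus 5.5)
The plan is to follow the elliptic regularization scheme sketched above, in the spirit of Huisken--Ilmanen. The main new difficulty is the term $\sigma(\nu,\nu)$, which makes the frozen functional $J^\sigma_{u,\nu}$ depend on the normal as well as on $u$.

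\emph{Step 1: approximating problems.} Fix the proper smooth subsolution $v$ of \ref{*2-eq}. Take domains $\Omega_L=\{v<L\}\setminus \bar E_0$ exhausting $M\setminus \bar E_0$. For $\epsilon>0$, solve the Dirichlet problem
\begin{align*}
\mathrm{div}\Big(\tfrac{\nabla u}{\sqrt{\epsilon^2+|\nabla u|^2}}\Big)-\sqrt{\epsilon^2+|\nabla u|^2}-\tfrac{\sigma(\nabla u,\nabla u)}{\epsilon^2+|\nabla u|^2}=0,
\end{align*}
with $u=0$ on $\partial E_0$ and $u=L-2$ on $\partial\{v<L\}$. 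This is exactly the statement that $U^\epsilon=u^\epsilon-\epsilon z$ solves $\tilde{\mathcal E}(U^\epsilon)=0$. I would solve it by the Leray--Schauder continuity method, which needs $C^0$, $C^1$ and hence (by De Giorgi--Nash/Schauder) $C^{2,\alpha}$ a priori bounds that are uniform in the continuity parameter.

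\emph{Step 2: a priori estimates uniform in $\epsilon$.}
\begin{itemize}
\item[(a)] The $C^0$ bounds come from comparison. The structure is quasilinear elliptic with a zeroth-order-free but gradient-dependent lower-order term, so the comparison principle applies. Use $v$ (suitably shifted) as a lower barrier and $L$ as a constant upper barrier.
\item[(b)] Boundary gradient bounds: at $\partial E_0$ use the $C^2$ boundary and a local barrier of the form $C\,d(x,\partial E_0)$ near $\partial E_0$, using $H_{\partial E_0}$ bounded. At the outer boundary use $v$ itself.
\item[(c)] Interior gradient estimate: this is the step I expect to be the main obstacle. Following Huisken--Ilmanen, I would compute the evolution of $\tilde H-|\tilde\nu|^2_\sigma$ (the inverse speed) along the smooth flow $\tilde N^\epsilon_t$ in $\Omega_L\times\mathbb R$. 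Then apply the maximum principle to $|\tilde\nabla U^\epsilon|$ times a cutoff built from $v$ or the distance function.
\end{itemize}
The extra terms in (c) relative to ordinary IMCF are $(\nabla_\nu\sigma)(\nu,\nu)$ and the gradient term $\sigma(\nu,\nabla^N(\cdot))/(\cdot)$, visible in Lemma \ref{evolution-eq-classical}. The first is controlled by the global $C^{0,1}$ bound on $\sigma$. The second is absorbed at a maximum point, where the gradient of the test quantity vanishes. Nonnegativity of $\sigma$ keeps the sign of the reaction term favorable. The output is a local bound $|\nabla u^\epsilon|\le C(K)$ on compact $K$, independent of $\epsilon$ and $L$.

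\emph{Step 3: passage to the limit.} Arzelà--Ascoli gives $U^{\epsilon_i}\to U=u(x)$ locally uniformly, and the bounded measurable fields $\tilde\nu_{\epsilon_i}$ converge weakly. Each smooth level set $\{U^{\epsilon_i}<t\}$ minimizes $J^{\sigma}_{U^{\epsilon_i},\tilde\nu_{\epsilon_i}}$ in $\Omega_{L_i}\times\mathbb R$, by a calibration argument using the divergence-free-up-to-RHS field $\tilde\nu_{\epsilon_i}$.

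The minimizing sets are almost minimizers of perimeter, since the bulk term is locally bounded. Hence they have uniform $C^{1,\alpha}$ regularity off a small singular set, which is empty in our dimensions after the vertical product. This yields strong $L^1_{loc}$ convergence of $\tilde\nu_{\epsilon_i}$ to the normal of the limit sets, and hence a.e.\ convergence.

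With a.e.\ convergence, the bulk integrand $|\nabla U|+\sigma(\tilde\nu,\tilde\nu)$ passes to the limit. Combined with lower semicontinuity of perimeter and the standard replacement argument, this shows that $\{U<t\}$ minimizes $J^\sigma_{U,\tilde\nu}$, giving (ii) off jump times. At jump times I would argue on the interior of $\{U=t_0\}$: there the limiting normals come from local minimizers, which gives the $C^{1,\alpha}$ surfaces $\partial\tilde E_{\tilde x_0}$ and the regularity in (iii).

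Finally, translation invariance follows because the vertical tilt $\epsilon z$ vanishes in the limit. Properness, $u\to\infty$, follows from the lower barrier $v$.
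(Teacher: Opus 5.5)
Your proposal follows essentially the same route as the paper's proof, which is cited from \cite[Theorem 4.9]{Dong26SigmaIMCF} and sketched in Section 2. The ingredients match: elliptic regularization by the $\epsilon$-translating graphs $U^\epsilon=u^\epsilon-\epsilon z$ with $\epsilon=\epsilon(L)\to 0$; local Lipschitz bounds from the maximum principle for the inverse speed $|\tilde\nabla U^\epsilon|=\tilde H-|\tilde\nu_\epsilon|^2_\sigma$, where $\sigma\ge 0$ gives $|\tilde{\mathrm{II}}|^2\ge(\tilde H-|\tilde\nu_\epsilon|^2_\sigma)^2/n$; Arzel\`a--Ascoli; and $C^{1,\alpha}$ regularity of the almost-minimizing level sets, which yields a.e.\ convergence of the $z$-independent normals $\tilde\nu_{\epsilon_i}$. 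That convergence is exactly what lets the frozen bulk term pass to the limit and supplies $\tilde\nu$ on the jump regions.

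Two details need adjusting. First, the linear barrier $C\,d(x,\partial E_0)$ does not give the $L$-independent bound at $\partial E_0$ that you claim. To lie above $u^\epsilon$ on the far edge $\{d=\delta\}$ of a collar it needs $C\delta\ge L-2$, since $u^\epsilon\le L-2$ is the only $C^0$ bound you have there. Use instead a supersolution that blows up at a fixed distance, e.g.\ $-A\log(1-d/\delta)$ with $A=\delta(\sup|\Delta d|+1)$ and $\epsilon$ small; the nonnegative $\sigma$ term only helps supersolutions, so this works. Second, $v$ is a lower barrier for the regularized equation only after you make it a strict subsolution and take $\epsilon\le\epsilon(L)$.
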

As a corollary, we obtain
\begin{thm}\label{P-IMCF-existence}
	Let $(M^3, g, \kten)$ be a complete, connected, asymptotically flat initial data set without boundary. Assume that $\Pten \geq 0$. Then for any nonempty, precompact, open set $E_0 \subset M$ with $C^2$-boundary, there exists a weak solution $(U, \tilde{\nu })$ of the $\sigma$-IMCF with $\sigma = \Pten$ and with initial condition $E_0$ as in Definition \ref{defn-weak}.
\end{thm}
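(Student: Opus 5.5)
We will obtain Theorem \ref{P-IMCF-existence} as a direct application of Theorem \ref{sgm-IMCF-existence} with $\sigma=\Pten=(\tr_g\kten)g-\kten$ and $n=3$. The work is to check its three hypotheses in the asymptotically flat setting. Since $M$ is complete, connected and without boundary, the remaining hypotheses are: (a) $\Pten$ is smooth, nonnegative and has a uniform global $C^{0,1}$ bound; (b) there is a proper smooth subsolution of \ref{*2-eq} with precompact initial condition.

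For (a), smoothness is inherited from $\kten$ and $g$, and $\Pten\geq 0$ is the standing assumption. For the Lipschitz bound we cover $M$ by a compact core and the single uncompactified end. On the compact core every smooth tensor has bounded $C^1$ norm. On the end, the decay conditions \eqref{AF-condition-0} give $|\kten_{ij}|+|x||\partial\kten_{ij}|\leq C|x|^{-2}$, and $g$ is uniformly equivalent to $\delta$ with Christoffel symbols $O(|x|^{-2})$. Hence $|\Pten|_g+|\nabla\Pten|_g\leq C$ globally. (The other ends were compactified, as arranged in the preliminaries.)

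For (b), the main step, we build a subsolution in the spirit of Huisken--Ilmanen, outside a large compact set in the end. Write $\mathcal{E}(v)=\diver(\nabla v/|\nabla v|)-|\nabla v|-\Pten(\nabla v,\nabla v)/|\nabla v|^2$; a subsolution is one with $\mathcal E(v)\geq 0$, or $\leq0$ depending on the convention fixed in \cite{Dong26SigmaIMCF}, and we match that convention. Take $v=\beta\log|x|$ with $0<\beta<2$ on $\{|x|>r_0\}$. Its level sets are coordinate spheres with mean curvature $2/|x|+O(|x|^{-2})$, while $|\nabla v|=\beta/|x|+O(|x|^{-2})$ and $\Pten(\nu,\nu)=O(|x|^{-2})$. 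Therefore
\begin{align*}
\mathcal E(v)=\frac{2-\beta}{|x|}+O(|x|^{-2})>0
\end{align*}
for $|x|\geq r_0$ large. This $v$ is proper, smooth, and its initial set $\{|x|<r_0\}\cup(\text{core})$ is precompact. The point is that $\Pten$ decays like $|x|^{-2}$, faster than the $|x|^{-1}$ gap coming from $\beta<2$. If the definition in \cite{Dong26SigmaIMCF} requires the subsolution to be defined on all of $M$, or only outside the initial set, we use the latter form, as in \cite[Lemma 3.3]{HI01}.

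With (a) and (b) verified, Theorem \ref{sgm-IMCF-existence} applies to any nonempty, precompact, open $E_0\subset M$ with $C^2$ boundary. It yields a weak solution $(U,\tilde\nu)$ in the sense of Definition \ref{defn-weak} with $\sigma=\Pten$. The expected obstacle is only bookkeeping in (b): sign conventions, and ensuring the error terms are uniform in all directions of the end.
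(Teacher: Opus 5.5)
Your proposal is correct and follows the paper's route. The paper records this statement simply as a corollary of Theorem \ref{sgm-IMCF-existence} with $\sigma=\Pten$. Your two checks supply exactly the hypotheses the paper leaves implicit: the global Lipschitz bound on $\Pten$ coming from the decay of $\kten$, and the strict proper subsolution $v=\beta\log|x|$ with $0<\beta<2$, whose $(2-\beta)/|x|$ gap absorbs the $O(|x|^{-2})$ contributions of $\Pten(\nu,\nu)$ and of the metric.
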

In the following, we denote by $(u, \nu )= (U, \tilde{\nu }|_{TM})$, and we will simply say that $(u, \nu )$ is a weak solution of the $\Pten$-IMCF. We write $E_t:= \{ u< t\} $, $E_{t}^{+}:= \{ u \leq t\} $, $N_t := \partial E_t$ and $N_t ^{+}: = \partial E_t ^{+}$.

\subsection{Outward optimizing hulls}
We slightly modify the definition of outward optimizing hull and notice that the same arguments and results still work. We assume $\Pten \geq 0$ in the following.

Let $\Omega $ be an open set. We call $E$ an \textit{outward optimizing hull with respect to $\nu $} (with weight $\Pten $ in $\Omega $) if $E$ minimizes area minus bulk energy $\Pten (\nu ,\nu )$ on the outside in $\Omega $, that is, if
\begin{align}\label{defn-optimizing-hull}
	|\partial ^* E \cap K| - \int_{E \cap K} \Pten (\nu ,\nu ) \leq |\partial ^* F \cap K| - \int_{F \cap K} \Pten (\nu ,\nu ),
\end{align}
for any $F$ containing $E$ such that $F \setminus E \subset \subset \Omega $, and any compact set $K$ containing $F \setminus E$. We say that $E$ is \textit{strictly outward optimizing with respect to $\nu $} (with weight $\Pten$ in $\Omega $) if equality implies that $F \cap \Omega = E \cap \Omega $ a.e. 

Let $E$ be any measurable set. Define $E' = E'_{\Omega }$ to be the intersection of (the Lebesgue points of) all the strictly outward optimizing hulls in $\Omega $ that contain $E$. Up to sets of measure zero, this may be realized by a countable intersection, so $E'$ itself is a strictly outward optimizing hull and open. Due to the asymptotic flatness of $g$ and $\kten$, if $E \subset \subset M$, then $E'$, taken in $M$, exists and is precompact as well. See also \cite{Eichmair10}. In particular, if $M \setminus E_0$ is an exterior region, since $\partial E_0$ is outermost generalized apparent horizon, we know $E_0$ is strictly outward optimizing.
If $\partial E$ is $C^2$, then we know $\partial E'$ is $C^{2,\alpha }$ wherever it does not contact the obstacle $E$. 

\begin{lem}[{\cite[Lemma 5.2]{Dong26SigmaIMCF}}]\label{lem-optimizing-property}
	Suppose that $(u, \nu )$ is a weak solution of the $\Pten$-IMCF with initial condition $E_0$. Then
	\begin{itemize}
		\item [(i)] $E_t$ is outward optimizing in $M$ for $t>0$;
		\item [(ii)] $E_t ^{+}$ is outward optimizing in $M$ for $t \geq 0$;
		\item [(iii)] $|\partial E_t ^{+}| = |\partial E_t| + \int_{E_t ^{+}\setminus E_t}\Pten(\nu ,\nu )$, for all $t>0$. This extends to $t=0$ precisely if $E_0$ is outward optimizing.
	\end{itemize}
\end{lem}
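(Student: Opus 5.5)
The plan is to follow the Huisken--Ilmanen scheme for outward minimizing properties of weak IMCF (their Property 1.4), adapted to the functional $J^{\Pten}_{u,\nu}$. The key point is that the bulk term in $J^{\Pten}_{u,\nu}$ splits as $|\nabla u| + \Pten(\nu,\nu)$. The piece $\int |\nabla u|$ over regions where $u\ge t$ can only help: it makes competitors larger and more costly. The piece $\Pten(\nu,\nu)$, with $\nu$ frozen, is exactly the weight appearing in (\ref{defn-optimizing-hull}). Throughout I would work on the translation-invariant lift in $M\times\mathbb{R}$ and then pass to $M$. Restricting to slabs $K\times[0,1]$ and using vertical invariance lets the minimizing property of $\tilde E_t$ descend to $E_t$ with the functional $|\partial^*F\cap K| - \int_{F\cap K}(|\nabla u|+\Pten(\nu,\nu))$.

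For (i), fix $t>0$ and a competitor $F\supset E_t$ with $F\setminus E_t\subset\subset M\setminus \bar E_0$ (if needed, first note that $E_t\supset E_0$). By Definition \ref{defn-weak}(ii), $J^{\Pten}_{u,\nu}(E_t)\le J^{\Pten}_{u,\nu}(F)$, which gives
\[
|\partial^*E_t\cap K| \le |\partial^*F\cap K| - \int_{F\setminus E_t}\big(|\nabla u|+\Pten(\nu,\nu)\big).
\]
Dropping $\int_{F\setminus E_t}|\nabla u|\ge 0$ yields (\ref{defn-optimizing-hull}). For competitors reaching into $E_0$ there is nothing to check, since $F\supset E_t\supset E_0$.

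For (ii), I would approximate $E_t^+=\bigcap_{s>t}E_s$. Each $E_s$ is outward optimizing by (i). For a competitor $F\supset E_t^+$, I would test with $F\cup E_s$ against $E_s$ and let $s\downarrow t$. This uses lower semicontinuity of perimeter, the convergence $E_s\to E_t^+$ in $L^1_{loc}$, and dominated convergence for the bounded bulk term (here $\Pten$ is Lipschitz and $\nu$ is measurable and unit). This gives (ii) for $t>0$. For $t=0$, I would instead use $E_0^+=\bigcap_{s>0}E_s$ in the same limit.

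For (iii), the inequality $|\partial E_t^+|\le|\partial E_t|+\int_{E_t^+\setminus E_t}\Pten(\nu,\nu)$ follows from (i) by taking $F=E_t^+$, noting that $|\nabla u|=0$ a.e. on $\{u=t\}$. The reverse inequality I expect to be the main obstacle. Following Huisken--Ilmanen, I would obtain it from the minimizing property of $E_t^+$, tested against $E_s$ for $s<t$ approaching $t$. Since $E_s\to E_t$, the limit of $J^{\Pten}_{u,\nu}(E_s)$ should give $|\partial E_t|$, and comparing $J(E_t^+)\le J(E_s)$ yields $|\partial E_t^+| - \int_{E_t^+}\Pten(\nu,\nu)\le|\partial E_t|-\int_{E_t}\Pten(\nu,\nu)+o(1)$. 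The delicate part is the convergence of perimeters $|\partial E_s|\to|\partial E_t|$. This requires the weak solution property that perimeter is continuous from below, obtained from the minimizing property and the $C^{1,\alpha}$ regularity of the leaves. At $t=0$, continuity from below is replaced by the minimizing hypothesis on $E_0$ itself, which explains the stated extension precisely when $E_0$ is outward optimizing.
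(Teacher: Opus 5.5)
Your argument for (i) is fine, but (iii) has a genuine gap. First, the inequality you extract from (i) goes the wrong way. Outward optimality of $E_t$ tested with $F=E_t^+$ gives $|\partial E_t|-\int_{E_t}\Pten(\nu,\nu)\le|\partial E_t^+|-\int_{E_t^+}\Pten(\nu,\nu)$, which is the \emph{lower} bound $|\partial E_t^+|\ge|\partial E_t|+\int_{E_t^+\setminus E_t}\Pten(\nu,\nu)$. Second, your ``reverse'' step $J(E_t^+)\le J(E_s)$, $s<t$, again produces the upper bound, not the reverse. Once the labels are corrected, the lower bound is fine; the problem is the upper bound. That step treats $E_s\subset E_t^+$ as an \emph{inner} competitor, so it needs $E_t^+$ to minimize $J^{\Pten}_{u,\nu}$ from both sides. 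Your (ii) only gives outward optimality, and Definition~\ref{defn-weak}(ii) only asserts minimality of $\{u<t\}$. The upper bound should come from a set already known to minimize. For $r>t$, $E_r$ minimizes $J^{\Pten}_{u,\nu}$, and $E_t$ is an admissible inner competitor because $E_r\setminus E_t\subset\{t\le u\le r\}$ is compact in $M\setminus\bar E_0$ when $t>0$. This gives $|\partial E_r|\le|\partial E_t|+\int_{E_r\setminus E_t}\big(|\nabla u|+\Pten(\nu,\nu)\big)$. Now let $r\downarrow t$ and use lower semicontinuity of perimeter, dominated convergence, and $|\nabla u|=0$ a.e.\ on $\{u=t\}$; this yields $|\partial E_t^+|\le|\partial E_t|+\int_{E_t^+\setminus E_t}\Pten(\nu,\nu)$. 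No continuity of perimeter from below is needed. Equivalently, as in Huisken--Ilmanen, you can show that $E_t^+$ minimizes $J^{\Pten}_{u,\nu}$ as the limit of the minimizers $E_s$, $s\downarrow t$, which gives (ii) and this half of (iii) at once.

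Three further steps need repair.

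\emph{The limit in (ii).} Lower semicontinuity controls only the left side of $J(E_s)\le J(F\cup E_s)$; nothing you cite bounds $|\partial(F\cup E_s)|$ by $|\partial F|+o(1)$. Combine your inequality with $|\partial(F\cup E_s)|+|\partial(F\cap E_s)|\le|\partial F|+|\partial E_s|$ to get $|\partial(F\cap E_s)|\le|\partial F|-\int_{F\setminus E_s}\Pten(\nu,\nu)$. Then apply lower semicontinuity to $F\cap E_s\to E_t^+$. This also covers $t=0$, since $E_0^+=\bigcap_{s>0}E_s$.

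\emph{The case $t=0$ of (iii).} Your single sentence does not prove the stated equivalence. $E_0$ is not an admissible competitor for $E_s$, because $E_s\setminus E_0$ contains $\partial E_0$. So the upper bound at $t=0$ needs outer parallel neighborhoods of $E_0$, which exist because $\partial E_0$ is $C^2$. The ``only if'' direction needs that $E_0^+$ minimizes $|\partial G|-\int_G\Pten(\nu,\nu)$ among sets $E_0\subset G\subset E_0^+$, combined with (ii) and the same cut-and-paste inequality.

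\emph{Descent from the lift.} This requires slabs $K\times[0,L]$ with $L\to\infty$, not $K\times[0,1]$. Only then do the two horizontal faces, each of volume $\mathrm{vol}(F\triangle E_t)$, become negligible after dividing by $L$.
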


By the outward optimizing property of $E_{t}^{+}$ for each $t \geq 0$ and the $C^{1,\alpha }$-regularity of $\partial E ^{+}_{t}$, we know that in the weak sense
\begin{align}\label{mean-curv-hull}
	\begin{split}
		H - \Pten(\nu ,\nu ) &= 0 \text{ on } \partial E_{t} ^{+} \setminus \partial E_{t},\\
		H - \Pten(\nu ,\nu ) & \geq 0 \quad \mathcal{H}^{n-1}\text{-a.e. on } \partial E_{t}^{+} \cap \partial E_{t}.
	\end{split}
\end{align}
By the standard elliptic regularity, $\partial E_{t}^{+} \setminus \partial E_{t}$ is in fact $C^{2,\alpha }$-smooth so that the mean curvature is also well defined in the classical sense. In particular, if we start a weak flow from the outermost generalized apparent horizon $E_0$ as constructed in \cite{Eichmair10}, then $E_{0} ^{+} = E_0$.

\subsection{Uniqueness and smoothness of weak solutions}
The same argument of \cite[Theorem 5.6]{Dong26SigmaIMCF} gives the following
\begin{thm}\label{thm-smooth-uniqueness}
	Let $E_0$ be a precompact open set in $M$ such that $\partial E_0$ is $C^2$ with $H- \Pten(\nu ,\nu )>0$ and $E_0 = E_0 ^{+}$. Then there exists a weak solution $(E_t)_{0<t<\infty}$ of the $\Pten$-IMCF with initial condition $E_0$, as a limiting of $\epsilon _i$-translating graphs, coincides with the $C^{2,\alpha }$-classical solution for a short time, provided that $E_t$ remains precompact for a short time. In particular, $u$ is $C^{2,\alpha }$ for a short time.
\end{thm}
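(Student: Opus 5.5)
The plan is to follow the scheme of \cite[Theorem 5.6]{Dong26SigmaIMCF}, which goes back to Huisken--Ilmanen \cite{HI01}: first construct a classical solution for short time, then show that the $\epsilon_i$-translating graphs converge to it.

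\emph{Short-time classical solution.} Since $\partial E_0$ is $C^2$ with $H-\Pten(\nu,\nu)>0$, the flow \ref{*-eq} with $\sigma=\Pten$ is a strictly parabolic, fully nonlinear equation. Written as a normal graph $w$ over $N_0$, the speed is $1/(H-\Pten(\nu,\nu))$. Its linearization is $\Delta$ divided by $(H-\Pten(\nu,\nu))^2$, plus lower order terms, and the lower order terms include the derivatives of $\Pten(\nu,\nu)$ in $\nu$. Standard parabolic Schauder theory, or the inverse function theorem in parabolic H\"older spaces, then gives a $C^{2,\alpha}$ solution $N_t$, $0\le t\le T$, with $H-\Pten(\nu,\nu)>0$ throughout. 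Because the normal velocity is positive, the leaves foliate $\overline{E_T}\setminus E_0$. Setting $u=t$ on $N_t$ and $u<0$ in $E_0$ therefore defines a $C^{2,\alpha}$ function on $E_T\setminus E_0$ with $\nabla u\neq 0$, solving \ref{*2-eq} classically. The assumption that $E_t$ stays precompact makes all of this take place in a compact region, where $\Pten$ is smooth and bounded.

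\emph{Comparison with the approximating weak solution.} Let $(U,\tilde\nu)$ be the limit of $\epsilon_i$-translating graphs given by Theorem \ref{P-IMCF-existence}. I want to show that $u$ agrees with the classical solution on $E_T\setminus E_0$. The natural route is to work with the smooth functions $U^{\epsilon}$ on $\Omega_L\times\mathbb R$ and compare them with the vertically translation-invariant extension of the classical solution. The classical $v$ solves $\tilde{\mathcal E}(v)=0$ in the product. After small perturbations of the form $(1\pm\delta)v\pm\delta'$ it becomes a strict sub- or supersolution of $\tilde{\mathcal E}$ with the $\epsilon z$-term taken into account, and the perturbation shifts the boundary values at $N_0$ slightly. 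The comparison principle for \ref{*2-eq}, used in \cite{Dong26SigmaIMCF} to obtain local Lipschitz bounds, then traps $U^{\epsilon_i}$ between these barriers near $\partial E_0$ and up to time $T$. Letting $i\to\infty$ and then $\delta\to0$ gives $u=v$ on $E_T\setminus \overline{E_0}$.

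Once $u=v$, the normal field $\nu$ must be $\nabla u/|\nabla u|$ wherever $\nabla u\neq0$: the limit of $\tilde\nu_{\epsilon_i}$ is determined by the $C^1$ convergence of the level sets, which is smooth and nondegenerate there. So the weak solution coincides with the classical one, and $u\in C^{2,\alpha}$ for short time. The hypothesis $E_0=E_0^+$ is used here to rule out an instantaneous jump at $t=0$. By Lemma \ref{lem-optimizing-property} and \eqref{mean-curv-hull}, the weak flow would otherwise jump to $E_0^+$ immediately. With $E_0$ already strictly outward optimizing there is no jump, and the barriers can be applied at $t=0$.

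\emph{Main obstacle.} The hardest step will be the barrier comparison in the presence of the anisotropic term $\Pten(\nabla U,\nabla U)/|\nabla U|^2$. This term depends on the direction of $\nabla U$, not only on its magnitude, so the strict monotonicity needed for a maximum principle comes only from the $|\nabla U|$ term. I would first verify that the difference of the operators at a touching point has a sign, using $\Pten\ge0$ and the fact that at a touching point the two gradients are parallel. Parallel gradients make the two $\Pten(\nu,\nu)$ terms equal, so the comparison reduces to the isotropic one in \cite{HI01}.
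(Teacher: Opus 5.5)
Your short-time existence step is fine, and the anisotropic term is not where the difficulty lies. At an interior maximum of $u^{\epsilon_i}-w$ the two gradients are equal, not merely parallel, so the $\Pten$-terms cancel identically. The genuine gap is in the comparison step.

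Since $U^{\epsilon}-W$ is independent of $z$ for $W=w(x)-\epsilon z$, you are really doing an elliptic comparison between $u^{\epsilon_i}$ and $w=(1\pm\delta)v\pm\delta'$ on $E_T\setminus\overline{E_0}$. The maximum principle needs the ordering on the \emph{whole} boundary, including the outer leaf $N_T=\{v=T\}$, and there you have no information about $u^{\epsilon_i}$.

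For the upper bound $u\le v$ this can be repaired locally. For $f'>0$ one has $\operatorname{div}(\nabla f(v)/|\nabla f(v)|)-|\nabla f(v)|-\Pten(\nu,\nu)=(1-f'(v))|\nabla v|$. So, e.g., $f(s)=s+\delta'-\delta\log(1-s/T)$ is a strict supersolution that blows up on $N_T$; the $O(\epsilon_i^2)$ regularization error is absorbed for large $i$.

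The lower bound $u\ge v$, however, cannot come from any comparison argument localized near $\partial E_0$. Take $\kten=0$ and let $E_0$ be the union of two disjoint, sufficiently close balls in $\mathbb{R}^3$. Then $H-\Pten(\nu,\nu)=H>0$ and the classical flow exists for a short time. But $E_0^+\supsetneq E_0$, so the weak solution jumps at $t=0$ and $u=0<v$ on part of $E_T\setminus E_0$. Correspondingly, $u^{\epsilon_i}\to 0$ on part of $N_T$, and your lower barrier $(1-\delta)v-\delta'$ is violated there.

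Nothing in your comparison distinguishes this example from the situation of the theorem. The hypothesis $E_0=E_0^+$ enters only through the sentence that ``the barriers can be applied at $t=0$,'' which does not supply the missing data on $N_T$. Even what $E_0=E_0^+$ does give, namely $u>0$ on $N_T$, is not enough. Barriers of the form $f(v)$ are subsolutions only when $0<f'<1$, so they yield only $u\ge c\,v$ with $c=\min_{N_T}u/T\le 1$, not $u\ge v$. As written, your argument would equally ``prove'' the false statement in the two-ball example.

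What is missing is global, variational input. The field $\nu_v=\nabla v/|\nabla v|$ calibrates the classical leaves, since $\operatorname{div}\nu_v=|\nabla v|+\Pten(\nu_v,\nu_v)$. Hence each classical sublevel set $E^v_t=\{v<t\}$ minimizes $J^{\Pten}_{v,\nu_v}$ among competitors differing from it inside the swept region. Combining this with the strict outward optimality of $E_0$ and with $H-\Pten(\nu,\nu)>0$ near $\partial E_0$, one needs to show that $E^v_t$ is an outward optimizing hull in all of $M$ for small $t$.

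That property then has to be played against the minimizing property of the sets produced by the $\epsilon_i$-translating graphs. One route is a Huisken--Ilmanen type comparison with the competitors $E_t\cup E^v_s$ and $E_t\cap E^v_s$. Another is to glue the classical flow on $[0,\tau]$ to a weak flow started from $E^v_\tau$ and identify the result with the limit. Either way, this is the step that forces the weak flow not to run ahead of the classical one, and it is where $E_0=E_0^+$ does actual work.
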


\subsection{Topological consequences}
Let $(M^3, g, \kten)$ be a complete, connected, asymptotically flat initial data set with $\Pten \geq 0$. We allow $M$ to have a compact $C^2$-boundary consisting of past trapped surfaces, that is, surfaces $\Sigma $ satisfying
\begin{align*}
	H_{\Sigma } \leq \mathrm{tr}_{\Sigma }\kten.
\end{align*} 
Notice that since $\mathrm{tr}_{\Sigma } \kten \geq 0$ under our assumptions, minimal surfaces and traditional apparent horizons are always generalized trapped surfaces. 

As in \cite[Section 5]{Dong26SigmaIMCF}, we have the following topological consequences. For any fixed end, define the exterior region $M'$ so that the interior of $M'$ includes the given end and contains no compact immersed generalized apparent horizons. By \cite[Theorem 1.1]{Eichmair10} and \cite[Theorem 5.1]{EGP13}, we know that $\partial M'$ encloses $\Sigma $, $\partial M'$ consists of closed embedded $C^{2,\alpha }$-smooth outermost generalized apparent horizon, which are also strictly area outerminimizing in the sense that every other surface which encloses it in $M'$ has larger area, and  $M'$ is diffeomorphic to $\mathbb{R}^3$ minus a finite number of open $3$-balls. Finally, we have the following connectedness lemma.
\begin{lem}[{\cite[Lemma 5.9]{Dong26SigmaIMCF}}]
			Suppose that $M$ is connected and simply connected with no boundary and a single, asymptotically flat end, and $(E_t)_{t>0}$ is a weak solution of the $\Pten $-IMCF with initial condition $E_0$. If $\partial E_0$ is connected, then $N_t$ remains connected as long as it stays compact.
\end{lem}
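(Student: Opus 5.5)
This connectedness lemma is quoted from \cite[Lemma 5.9]{Dong26SigmaIMCF}. I would adapt the Huisken--Ilmanen argument \cite{HI01} for connectedness of weak IMCF level sets. It rests on two facts: the minimizing property of the sets $E_t$ and $E_t^+$ from Definition \ref{defn-weak} and Lemma \ref{lem-optimizing-property}, and the topology of $M$, which is simply connected with a single end.

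The plan has two steps.

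\textbf{Step 1: $E_t$ has no bounded complementary components.} Fix $t>0$ with $N_t$ compact. I claim $M\setminus E_t$ has no bounded connected component. Suppose $W$ is such a component. Set $F := E_t\cup W$. Then $F\supset E_t$, the difference $F\setminus E_t=W$ is precompact, and $\partial^* F\subset \partial^* E_t$ with $|\partial^* F| = |\partial^* E_t| - |\partial^* W|$. Because $\Pten\geq 0$, the bulk term satisfies
\[
-\int_{F\cap K}\Pten(\nu,\nu)\leq -\int_{E_t\cap K}\Pten(\nu,\nu).
\]
Hence
\[
J(F)\leq J(E_t)-|\partial^* W|<J(E_t),
\]
since $|\partial^*W|>0$ for a set of positive measure. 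The same holds for the full functional $J^{\Pten}_{u,\nu}$, because $|\nabla u|\geq 0$ only helps. This contradicts the outward optimizing property (i) of Lemma \ref{lem-optimizing-property}. The case $W\subset E_0$ is excluded, since $u<0$ on $E_0$ while $u \geq t$ on $W$. So every component of $M\setminus \bar E_t$ is unbounded, and with a single end there is exactly one.

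\textbf{Step 2: $E_t$ is connected.} Next I would show $E_t$ is connected, arguing as in \cite{HI01}. Each component of $E_t$ must meet $E_0$. If a component $V$ did not, then one checks that $E_t\setminus V$ has smaller $J$-energy. In the translated product $M\times\mathbb{R}$ this is the statement that $\tilde E_t$ minimizes $J$ in $(M\setminus\bar E_0)\times\mathbb{R}$, and a component disjoint from $E_0$ can be removed. This is the main obstacle: removing $V$ lowers area, but it also removes bulk $\int_V(|\nabla u|+\Pten(\nu,\nu))\geq 0$, which raises $J$. I would use the level-set structure instead. On $V$ we have $u<t$, and $u\geq 0$ near $\partial V$ outside $E_0$, so $u$ attains an interior minimum $s\in[0,t)$ on $\bar V$. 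Then the sublevel sets $E_{s'}$ for $s'$ slightly above $s$ have a tiny component near that minimum. Such a component would violate the monotonicity/minimizing property of the flow, since the minimizer $E_{s'}$ cannot create a component disjoint from $E_0$: for small sublevel components, area dominates the bounded bulk term by the isoperimetric inequality, so removing the component strictly lowers $J$. Since $\partial E_0$ is connected, $E_0$ is connected, and therefore $E_t$ is connected.

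\textbf{Step 3: $N_t$ is connected.} Now $E_t$ is connected and $M\setminus \bar E_t$ is connected and unbounded. Since $M$ is simply connected, a Mayer--Vietoris argument (the $H_1(M)=0$ argument of \cite[Lemma 4.2]{HI01}) shows that $\partial E_t=N_t$ is connected. The boundary here is $C^{1,\alpha}$ away from a singular set of codimension $\geq 8$, which is empty in dimension $3$, so the topological argument applies directly. The same reasoning covers $N_t^+$ using property (ii) of Lemma \ref{lem-optimizing-property}.
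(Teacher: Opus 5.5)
Your Steps 1 and 3 are fine. They reproduce the Huisken--Ilmanen structure that the cited \cite[Lemma 5.9]{Dong26SigmaIMCF} follows: no bounded complementary components, then Mayer--Vietoris with $H_1(M)=0$.

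The gap is the claim in Step 2 that $E_{s'}$ has a \emph{tiny} component near the minimum of $u$ on $\bar V$. Weak solutions have jump regions, so $u$ can attain its minimum $s$ on a set of positive measure. For $s=0$ this is simply the jump $E_0^+\supsetneq\bar E_0$ when $E_0$ is not outward optimizing. In that case the components $V'_{s'}$ of $E_{s'}$ containing the minimum decrease, as $s'\downarrow s$, to a set $Z\subset\{u=s\}\cap V$ of positive measure. Your isoperimetric bound $|\partial V'|\geq c|V'|^{2/3}\gg C|V'|$ is then no longer available. Passing to the limit in your comparison of $E_{s'}$ with $E_{s'}\setminus V'_{s'}$, using $|\nabla u|=0$ a.e.\ on $Z$, yields only
\[
|\partial^* Z|\le\int_Z\Pten(\nu,\nu).
\]
For $s>0$, comparing $E_s$ with $E_s\cup Z$ (allowed since $\bar E_s\cap V=\emptyset$) gives the reverse inequality. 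So you only learn $|\partial^*Z|=\int_Z\Pten(\nu,\nu)$, which is no contradiction. When $\Pten=0$ the plateau carries no bulk, which is why the Huisken--Ilmanen argument closes; here it does not.

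A finer estimate will not remove this. By Lemma \ref{lem-optimizing-property}(i), every $E_t$ with $t>0$ must meet, in positive measure, every ball $B$ with $|\partial B|<\int_B\Pten(\nu,\nu)$, however far $B$ is from $E_0$. Otherwise $E_t\cup B$ has strictly smaller energy. So regions where $\Pten$ is large are absorbed at every positive time. Nothing in the minimization, in $\Pten\geq 0$, or in the topology of $M$ forces this to happen through a set connected to $E_0$.

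You need an extra ingredient that excludes positive-measure plateaus in $M\setminus\bar E_0$. One natural route: the boundary of such a plateau lies in $\partial E_s^+\setminus\partial E_s$, where (\ref{mean-curv-hull}) gives $H=\Pten(\nu,\nu)=\tr_\Sigma\kten$. Its components facing the end are therefore closed generalized apparent horizons in $M\setminus\bar E_0$. Such surfaces are ruled out in the setting of Theorem \ref{thm-main}, where $\partial E_0$ is the outermost one, but your argument uses no hypothesis of this kind. As written, Step 2 only covers the case $|Z|=0$, where $|V'_{s'}|\to 0$.
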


\section{Monotonicity formula and the proof of the theorem}
\label{section-monotone}
Let's first assume that $(N_{t})_{0 \leq t \leq T}\subset M$ is a smooth solution of the $\Pten$-IMCF \ref{*-eq} consisting of closed connected surfaces.

For 
\begin{align*}
	A(t) = e ^{-t}|N_t|,
\end{align*} 
by Lemma \ref{evolution-eq-classical}, 
\begin{align*}
	A'(t) &= e ^{-t} \left( \frac{d}{d t}|N_t| - |N_t| \right)\\
	      &= e ^{-t} \left( \int_{N_t} \frac{H}{H - \Pten(\nu ,\nu )} - |N_t| \right) \\
	      &= e ^{-t} \int_{N_t} \frac{\Pten(\nu ,\nu )}{H - \Pten(\nu ,\nu )}\\
	      & \geq 0.
\end{align*} 

For
\begin{align*}
	B(t) = e ^{\frac{t}{2}} \left( 1- \frac{1}{16 \pi } \int_{N_t} (H - \Pten(\nu ,\nu ))^2 \right) ,
\end{align*}
by Lemma \ref{evolution-eq-classical}, 
\begin{align*}
	\begin{split}
	16 \pi & e ^{-\frac{t}{2}}B'(t)\\
		&= 8 \pi  - \frac{1}{2} \int_{N_t}(H- \Pten(\nu ,\nu ) )^2 -\frac{d}{d t} \int_{N_t} (H- |\nu |^2_{\Pten })^2 \\
									&= 8 \pi  - \frac{1}{2} \int_{N_t}(H- \Pten(\nu ,\nu ) )^2 + 2 \int_{N_t} \left(   \frac{|\nabla ^{N} (H-|\nu |^2_{\Pten })|^2}{(H-|\nu |^2_{\Pten })^2} + \left( \mathrm{Ric}(\nu ,\nu ) +|\mathrm{II}|^2 \right) \right) \\
									&\ \ +2 \int_{N_t}  (\nabla _{\nu } \Pten ) (\nu , \nu ) + 4 \int_{N_t} \frac{\Pten ( \nu , \nabla ^{N}(H- |\nu |^2_{\Pten }) )}{H- |\nu |^2_{\Pten }} - \int_{N_t}H(H-|\nu |^2_{\Pten }).
	\end{split}
\end{align*}  
By the Gauss equation
	\begin{align*}
		R + |\mathrm{II}|^2 + H^2 = R_{N_t} + 2 \left( \mathrm{Ric}(\nu , \nu ) + |\mathrm{II}|^2 \right) ,
	\end{align*} 
	and using the trace free tensor $\mathring{\mathrm{II}} = \mathrm{II} - \frac{1}{2} H g_{N_t}$ and the Gauss-Bonnet theorem,
	we have
	\begin{align*}
		\begin{split}
			16 \pi & e ^{-\frac{t}{2}}B'(t) \\
			       &\geq  \int_{N_t} 2\left| \frac{\nabla ^{N} (H-|\nu |^2_{\Pten })}{H-|\nu |^2_{\Pten }} + \Pten ^{TN}(\nu , \cdot ) \right| ^2  - 2|\Pten ^{TN}(\nu ,\cdot )|^2  \\
			       & + 8 \pi - 4 \pi \chi (N_t) - \frac{1}{2} \int_{N_t} (H- \Pten(\nu ,\nu ) )^2 + \int_{N_t} \left( R + |\mathring{\mathrm{II}}|^2 + \frac{3}{2}H^2 \right) \\
			       &+2 \int_{N_t}  (\nabla _{\nu } \Pten ) (\nu , \nu ) - \int_{N_t}H(H-\Pten (\nu ,\nu ))\\
			       &\geq  \int_{N_t} 2\left| \frac{\nabla ^{N} (H-|\nu |^2_{\Pten })}{H-|\nu |^2_{\Pten }} + \Pten ^{TN}(\nu , \cdot ) \right| ^2 + |\mathring{\mathrm{II}}|^2 \\
			       & + \int_{N_t} \left( R - \frac{1}{2} (\Pten(\nu ,\nu  ) )^2 + 2 H \Pten(\nu ,\nu ) - 2 |\Pten ^{TN}(\nu ,\cdot )|^2 + 2 (\nabla _{\nu }\Pten)(\nu ,\nu ) \right) ,
		\end{split}
	\end{align*} 
	where $\Pten ^{TN}(\nu , \cdot )$ denotes the restriction of $\Pten(\nu ,\cdot )$ to the tangent plane of $N_t$. 
Note that
\begin{align}\label{nabla-P-id}
	\begin{split}
		(&\nabla _{\nu } \Pten) (\nu ,\nu ) \\
		&= (\mathrm{div}_{g} \Pten) (\nu ) - \mathrm{div}_{N_t}(\Pten ^{TN}(\nu ,\cdot ) ) -H \mathbf{P}(\nu ,\nu ) + \langle \mathrm{II}, \mathbf{P}^{TN} \rangle \\
&= (\mathrm{div}_{g} \Pten) (\nu ) - \mathrm{div}_{N_t}(\Pten ^{TN}(\nu ,\cdot ) )
						   + \frac{1}{2} H\mathrm{tr}_g \Pten - \frac{3}{2}H \Pten(\nu ,\nu )   + \langle \mathring{\Pten} ^{TN}, \mathring{\mathrm{II}} \rangle,
	\end{split}
\end{align} 
where $\mathring{\Pten} ^{TN} = \Pten ^{TN} - \frac{1}{2} \mathrm{tr}_{N_t} \Pten ^{TN} g_{N_t}	$ is the trace free part.
So
\begin{align*}
\begin{split}
	&16 \pi  e ^{-\frac{t}{2}}B'(t) \\
	&\geq  \int_{N_t} 2\left| \frac{\nabla ^{N} (H-|\nu |^2_{\Pten })}{H-|\nu |^2_{\Pten }} + \Pten ^{TN}(\nu , \cdot ) \right| ^2 + |\mathring{\mathrm{II}}+ \mathring{\Pten} ^{TN}|^2\\
	& + \int_{N_t} \left( R - \frac{1}{2} (\Pten(\nu ,\nu  ) )^2 - H \Pten(\nu ,\nu ) - 2 |\Pten ^{TN}(\nu ,\cdot )|^2 + 2 (\mathrm{div}_{g}\Pten)(\nu ) + H \mathrm{tr}_g \Pten - |\mathring{\Pten} ^{TN}|^2 \right) .
\end{split}	
\end{align*} 
The equivalent DEC (\ref{eq:DEC2}) implies that
\begin{align*}
	R + \frac{1}{2}(\mathrm{tr}_g \Pten)^2  \geq  |\Pten|^2 + 2 |\mathrm{div}_{g} \Pten|.
\end{align*} 
Note that
\begin{align*}
	|\Pten|^2 = |\mathring{\Pten} ^{TN}|^2+ \frac{1}{2} (\mathrm{tr}_{N_t} \Pten ^{TN})^2 + 2|\Pten ^{TN}(\nu ,\cdot )|^2 + |\Pten(\nu ,\nu )|^2.
\end{align*} 
So
\begin{align*}
	&R - \frac{1}{2} (\Pten(\nu ,\nu  ) )^2 - H \Pten(\nu ,\nu ) - 2 |\Pten ^{TN}(\nu ,\cdot )|^2 + 2 (\mathrm{div}_{g}\Pten)(\nu ) + H \mathrm{tr}_g \Pten - |\mathring{\Pten} ^{TN}|^2 \\
	& \geq 16 \pi (\mu -|J|_g)
	+ \frac{1}{2} (\mathrm{tr}_{N_t} \Pten ^{TN})^2  - \frac{1}{2} (\mathrm{tr}_{g}\Pten)^2 + \frac{1}{2} (\Pten(\nu ,\nu  ) )^2 - H \Pten(\nu ,\nu ) + H \mathrm{tr}_g \Pten \\
	&\geq 16 \pi (\mu -|J|_g)+ \left( H- \Pten(\nu ,\nu ) \right) \left( \mathrm{tr}_g \Pten -  \Pten(\nu ,\nu ) \right)  \\
	&\geq 0.
\end{align*} 
That is,
\begin{align}\label{B'-ineq}
	\begin{split}
		16 \pi & e ^{-\frac{t}{2}}B'(t) \\
	&\geq  \int_{N_t} 2\left| \frac{\nabla ^{N} (H-|\nu |^2_{\Pten })}{H-|\nu |^2_{\Pten }} + \Pten ^{TN}(\nu , \cdot ) \right| ^2 + |\mathring{\mathrm{II}}+ \mathring{\Pten} ^{TN}|^2\\
	&\ \ + \int_{N_t}\left( 16 \pi (\mu -|J|_g)+ \left( H- \Pten(\nu ,\nu ) \right) \mathrm{tr}_{N_t} \Pten \right)   \\
	&\geq 0.
	\end{split}
\end{align} 

Thus, we have proved that if $(M^3,g ,k)$ is an initial data set satisfying the DEC and $\Pten \geq 0$, and if $(N_t)_{0 \leq t \leq T} \subset M^3$ is a smooth solution of the $\Pten$-IMCF so that each $N_t$ is closed and connected, then the following quantities 
\begin{align*}
	A(t),\ B(t),\ m_{\Pten}(N_t) := \sqrt{\frac{A(t)}{16 \pi }} B(t),
\end{align*} 
are all monotone increasing, whenever $A(t), B(t) \geq 0$.

Following the same arguments in \cite[Section 6]{Dong26SigmaIMCF}, together with the necessary modifications in Section \ref{section-preliminary}, we can prove the following monotonicity formula for weak solutions.

\begin{thm}\label{thm-weak-mono}
	Let $(M^3, g, \kten)$ be a complete, connected, asymptotically flat initial data set without boundary. Let $E_0$ be a precompact open set so that $\partial E_0$ is a connected $C ^{2, \alpha }$-smooth outermost past apparent horizon. Then there exists a weak solution $(N_t) _{0 \leq t< \infty}$ of the $\Pten$-IMCF with initial condition $E_0$ so that $m_{\Pten}(N_t)$ is monotone increasing, whenever the DEC (\ref{eq:DEC}) and the $2$-convexity condition $\mathbf{P} \geq 0$ hold.
\end{thm}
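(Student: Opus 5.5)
The plan follows the Huisken--Ilmanen scheme for weak IMCF, adapted as in \cite[Section 6]{Dong26SigmaIMCF}. We prove monotonicity for the $\epsilon_i$-translating graph approximations and pass to the limit.

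First, existence. By Theorem \ref{P-IMCF-existence}, a weak solution $(u,\nu)$ exists as the limit of the $\epsilon_i$-translating graphs $U^{\epsilon_i}$. Since $\partial E_0$ is an outermost past apparent horizon, it is also the outermost generalized apparent horizon when $\Pten\ge 0$. Hence $E_0=E_0^+$ is strictly outward optimizing, and Lemma \ref{lem-optimizing-property}(iii) gives $|N_0^+|=|N_0|$. Connectedness of $N_t$ for all $t$ comes from the connectedness lemma, after passing to the exterior region $M'$, which is diffeomorphic to $\mathbb{R}^3$ minus balls. Asymptotic flatness keeps $N_t$ compact.

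Second, monotonicity of $A(t)$ and of $\int_{N_t}(H-\Pten(\nu,\nu))^2$ in weak form.
\begin{itemize}
\item For $A$: the area identity in Lemma \ref{evolution-eq-classical} becomes, in the level-set form, $|N_t|\ge e^{t}|N_0|$ together with exponential growth up to jumps. At jump times Lemma \ref{lem-optimizing-property}(iii) shows area can only increase, since $\Pten\ge 0$. So $A$ is nondecreasing.
\item For $B$: on each approximating smooth graph flow $\tilde N^{\epsilon}_t$ in $\Omega_L\times\mathbb{R}$, apply \eqref{dt-int-H-nu} with a cutoff $\phi$ compactly supported in the $z$-direction. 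Then run the computation leading to \eqref{B'-ineq} in integrated form, keeping the good terms (the gradient square, $|\mathring{\mathrm{II}}+\mathring{\Pten}^{TN}|^2$, $\mu-|J|$, and $(H-\Pten(\nu,\nu))\mathrm{tr}_{N_t}\Pten\ge 0$) on the favorable side.
\end{itemize}
The Gauss--Bonnet term cannot be used on the noncompact graphs. Following Huisken--Ilmanen, we integrate in time first. This yields an inequality for $\int_{N_t}\phi(H-\Pten(\nu,\nu))^2$ that does not yet contain the Euler characteristic. We then pass $\epsilon_i\to0$, using lower semicontinuity of $\int (H-\Pten)^2$ and $|\mathring{\mathrm{II}}|^2$ under the $C^{1,\alpha}$ and weak $W^{2,2}$ convergence of level sets. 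After that we remove the cutoff and the vertical direction via translation invariance.

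On the limiting compact, connected surfaces $N_t\subset M$, we insert Gauss--Bonnet with $\chi(N_t)\le 2$, using that $N_t$ is a.e. $C^{1,\alpha}\cap W^{2,2}$. This gives an integrated weak form of $B(t)-B(s)\ge 0$ for a.e. $s<t$ away from jumps.

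At a jump time, $N_t^+$ replaces $N_t$. By \eqref{mean-curv-hull}, $H-\Pten(\nu,\nu)=0$ on $N_t^+\setminus N_t$, and it is unchanged on the common part. Hence $\int(H-\Pten)^2$ does not increase and $B$ does not drop. Combining this with the monotonicity of $A$ gives that $m_{\Pten}(N_t)$ is nondecreasing whenever $B\ge0$.

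The main obstacle is the limit step. We must control the error terms in the approximate identity: the terms involving $\nabla\phi$, the cross term $4\Pten(\nu,\nabla^N(H-\Pten))/(H-\Pten)$ that was absorbed into a square, and $(\nabla_\nu\Pten)(\nu,\nu)$ rewritten through \eqref{nabla-P-id}. These involve quantities like $\mathrm{div}_{N}(\Pten^{TN}(\nu,\cdot))$ and $\langle\mathring{\mathrm{II}},\mathring{\Pten}^{TN}\rangle$, which converge only weakly. The first approach to try is the one in \cite{Dong26SigmaIMCF}:
\begin{itemize}
\item the divergence term integrates to zero on closed leaves, and against $\phi$ is bounded by $\|\nabla\phi\|$;
\item the completed squares are lower semicontinuous;
\item the remaining terms are bounded by the uniform Lipschitz bound on $u^\epsilon$ and $|\Pten|_{C^{0,1}}$.
\end{itemize}
With these, the $\epsilon_i\to0$ limit preserves the inequality.
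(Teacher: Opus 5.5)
Your proposal follows essentially the same route as the paper: pass to the limit along the $\epsilon_i$-translating graphs as in \cite[Section 6]{Dong26SigmaIMCF}, rewrite $(\tilde\nabla_{\tilde\nu_i}\Pten)(\tilde\nu_i,\tilde\nu_i)$ through the divergence identity \eqref{nabla-P-id}, control the remaining terms by completed squares, lower semicontinuity and the algebra behind \eqref{B'-ineq}, and apply Gauss--Bonnet only on the limiting closed connected leaves. The paper is sharper on the tangential divergence term: because $\Pten(e_z,\cdot)=0$, integrating by parts against a vertical cutoff turns it into $2\int_r^s\int_{\tilde N^i_t}\phi'\langle\tilde\nu_i,e_z\rangle\Pten(\tilde\nu_i,\tilde\nu_i)$, which tends to $0$ for a fixed $\phi$ since $\langle\tilde\nu_i,e_z\rangle\to0$ (and $(\mathrm{div}_{\tilde g}\Pten)(\tilde\nu_i)=(\mathrm{div}_g\Pten)(\tilde\nu_i^T)$ converges by bounded convergence), whereas your bound by $\|\nabla\phi\|$ alone would still need an extra stretching-in-$z$ normalization to make that error disappear.
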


\begin{proof}
	The proof is the same as in \cite[Section 6]{Dong26SigmaIMCF} with necessary modifications.
Using the same notations as in \cite[Section 6]{Dong26SigmaIMCF}, we can replace all $\sigma _i$ by $\mathbf{P}$, and accordingly use the fact that when $t$ is not a jump time, $\tilde{N}_t ^{i} \to \tilde{N}_t$ in $C^{1,\alpha }$ and $|\tilde{\nu }_i|^2_{\mathbf{P}} \to |\tilde{\nu }|^2_{\mathbf{P}}$ in $C^{\alpha }$ on $\tilde{N}_t$. Then the only places where $\sigma ^{i} = |h| g$ has been used is the convergence of the term $\phi (\tilde{\nabla }_{\tilde{\nu }_i} \sigma ^{i})(\tilde{\nu }_i, \tilde{\nu }_i)$ and thus the arguments towards the final monotonicity formula. We indicate necessary modifications as follows.

	Recall that we have the same identity of \cite[Equation (30)]{Dong26SigmaIMCF}:
\begin{align}\label{ddt-identity-i}
	\begin{split}
		&\frac{d}{d t} \int_{\tilde{N}^{i}_t} \phi (\tilde{H}_{i}- |\tilde{\nu}_{i} |^2_{\mathbf{P} })^2 \\							&= \int_{\tilde{N}^{i}_t} (\tilde{H}_{i}-|\tilde{\nu}_i |^2_{ \mathbf{P}}) \tilde{\nabla} _{\tilde{\nu}_i }\phi -2 \langle \tilde{\nabla} ^{N} \phi , \frac{\tilde{\nabla} ^{N}(\tilde{H}_i-|\tilde{\nu}_i |^2_{\mathbf{P}})}{\tilde{H}_i- |\tilde{\nu}_i |^2_{\mathbf{P} }} \rangle \\
		&\ \ - 2 \int_{\tilde{N}^{i}_t}\phi \left(   \frac{|\tilde{\nabla} ^{N} (\tilde{H}_i-|\tilde{\nu}_i |^2_{ \mathbf{P}})|^2}{(\tilde{H}_i-|\tilde{\nu}_i |^2_{\mathbf{P} })^2} + \left( \tilde{\mathrm{Ric}}(\tilde{\nu}_i ,\tilde{\nu}_i ) +|\tilde{\mathrm{II}}|^2 \right) \right) \\
		&\ \ -2 \int_{\tilde{N}^{i}_t} \phi (\tilde{\nabla} _{\tilde{\nu}_i } \mathbf{P}) (\tilde{\nu}_i , \tilde{\nu}_i ) - 4 \int_{\tilde{N}^i_t} \phi \frac{\mathbf{P}( \tilde{\nu}_i , \tilde{\nabla} ^{N}(\tilde{H}_i- |\tilde{\nu}_i |^2_{\mathbf{P} }) )}{\tilde{H}_i- |\tilde{\nu}_i |^2_{\mathbf{P}}} + \int_{\tilde{N}^{i}_t}\phi \tilde{H}_i(\tilde{H}_i-|\tilde{\nu}_i |^2_{\mathbf{P} }).
	\end{split}
\end{align}

Similar to the identity (\ref{nabla-P-id}), we rewrite the term
\begin{align*}
	 (\tilde{\nabla }_{\tilde{\nu }_i} \mathbf{P})(\tilde{\nu }_i, \tilde{\nu }_i) = (\mathrm{div}_{\tilde{g}} \mathbf{P})(\tilde{\nu }_i) - \mathrm{div}_{\tilde{N}^{i}_t}(\mathbf{P}^{TN}(\tilde{\nu }_i, \cdot ) ) - H_{\tilde{N}^{i}_{t}} \mathbf{P}(\tilde{\nu }_i, \tilde{\nu }_i) + \langle \tilde{\mathrm{II}}, \mathbf{P}^{TN} \rangle.
\end{align*} 
Set $\tilde{\nu }_i ^{T} := \tilde{\nu }_i - \langle \tilde{\nu }_i, e_{z} \rangle e_{z}$. We already know that $\sup_{\tilde{N}^{i}_t \cap (M \times \mathrm{supp} \phi )}|\langle \tilde{\nu }_i, e_{z} \rangle| \to 0$ for a.e. $0\leq t \leq T$. So $\tilde{\nu }_i ^{T} \to \tilde{\nu } $ a.e. for a.e. $0\leq t \leq T$. Since $\tilde{g}$ and $\mathbf{P}$ are vertical invariant, we have 
\begin{align*}
	\phi (\mathrm{div}_{\tilde{g}} \mathbf{P})(\tilde{\nu }_i) = \phi (\mathrm{div}_{g} \mathbf{P})(\tilde{\nu }_i ^{T}).
\end{align*} 
By the bounded convergence theorem, for each $0 \leq r <s$, we have
\begin{align*}
	\int_{r}^{s} \int_{\tilde{N}^{i}_{t}} \phi (\mathrm{div}_{\tilde{g}} \mathbf{P})(\tilde{\nu }_i) \to \int_{r}^{s} \int_{\tilde{N}_{t}}\phi (\mathrm{div}_{g} \mathbf{P})(\tilde{\nu }).
\end{align*} 
Using integration by parts, similarly, since $\mathbf{P}(e_{z}, \cdot )=0$, we have
\begin{align*}
	2 \int_{r}^{s}\int_{\tilde{N}^{i}_{t}} \phi \mathrm{div}_{\tilde{N}^{i}_{t}}(\mathbf{P}^{TN}(\tilde{\nu }_i, \cdot ) ) &= 2 \int_{r}^{s}\int_{\tilde{N}^{i}_{t}} \phi ' \langle \tilde{\nu }_i, e_{z} \rangle \mathbf{P}(\tilde{\nu }_{i}, \tilde{\nu }_i ) \to 0.
\end{align*} 
The remaining terms can be controlled exactly as in \cite[Section 6]{Dong26SigmaIMCF}, together with the algebraic identities used to prove (\ref{B'-ineq}). 
\end{proof}

\begin{lem}
	Under the same assumption of Theorem \ref{thm-weak-mono}, and assume further that the DEC (\ref{eq:DEC}) and the $2$-convexity condition $\mathbf{P} \geq 0$ hold in a neighborhood of the end. Then 
	\begin{align*}
		\lim_{t\to \infty} m_{\mathbf{P}}(N_t) \leq m. 
	\end{align*} 
\end{lem}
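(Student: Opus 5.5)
We follow the Huisken--Ilmanen strategy for the asymptotic comparison of the Hawking mass with the ADM mass, adapted as in \cite[Section 7]{Dong26SigmaIMCF}. The comparison rests on two facts. First, the correction term $\Pten(\nu,\nu)$ is negligible at infinity. Second, the weak flow eventually becomes round.

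\textbf{Step 1: the correction is negligible.} By the asymptotic flatness (\ref{AF-condition-0}), $|\Pten| \leq C|x|^{-2}$. Expanding the square gives
\begin{align*}
  \int_{N_t}(H-\Pten(\nu,\nu))^2 = \int_{N_t}H^2 - 2\int_{N_t}H\,\Pten(\nu,\nu) + \int_{N_t}\Pten(\nu,\nu)^2 .
\end{align*}
We want the last two terms to be $o(|N_t|^{-1/2})$, so that after multiplying by $\sqrt{|N_t|}$ they vanish in the limit.

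For this we need to know where $N_t$ lies. Since $|N_t|$ grows like $e^t$ (by $A'\geq 0$ and the upper bound from comparison with coordinate spheres), the blow-down argument of \cite{HI01} applies. It shows that for $t$ large, $N_t$ lies in an annulus $\{c r_t \leq |x|\leq C r_t\}$ with $r_t=\sqrt{|N_t|/4\pi}$. Given this:
\begin{itemize}
  \item $\int_{N_t}\Pten(\nu,\nu)^2 \leq C r_t^{-2}$;
  \item $\int_{N_t}H\,\Pten(\nu,\nu) \leq C r_t^{-2}\big(|N_t|\int_{N_t}H^2\big)^{1/2} \leq C r_t^{-1}$, using Cauchy--Schwarz.
\end{itemize}
The bound $\int_{N_t} H^2 \leq C$ follows from $B(t)$ being monotone, together with the previous two estimates. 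Multiplying by $\sqrt{|N_t|}\sim r_t$, the $\int H\Pten$ term contributes only $O(1)\cdot r_t^{-1}\cdot r_t/r_t \to 0$ once we use the sharper annulus bound. More carefully, the contribution to $\mP$ is $\sqrt{|N_t|}\cdot O(r_t^{-1}) = O(1)$, so we will need a slightly better decay. We get it from $|\Pten|\leq C|x|^{-2}$ together with $\|H - 2/r_t\|_{L^2}\to 0$ (Step 2): the main part $\frac{2}{r_t}\int_{N_t}\Pten(\nu,\nu) = O(r_t^{-1})$ is multiplied by $r_t/16\pi$, which gives an apparent $O(1)$. This is exactly the delicate point.

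Here the trace-decay condition $|\sum_i \kten_{ii}|\leq C|x|^{-3}$ helps: $\mathrm{tr}\,\Pten = 2\,\mathrm{tr}\,\kten = O(|x|^{-3})$. Moreover $\Pten\geq 0$ gives $0\leq \Pten(\nu,\nu)\leq \mathrm{tr}_g\Pten = O(|x|^{-3})$. So $\Pten(\nu,\nu)=O(r_t^{-3})$ on $N_t$, and both correction terms are $O(r_t^{-1})$ after scaling, hence tend to $0$. This is the key use of $2$-convexity at the end.

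\textbf{Step 2: reduction to the Hawking mass.} By Step 1,
\begin{align*}
  \lim_{t\to\infty} m_{\Pten}(N_t) = \lim_{t\to\infty} m_H(N_t),
\end{align*}
where $m_H$ is the usual Hawking mass. To bound the right-hand side we repeat the argument of \cite[Section 7]{HI01}:
\begin{itemize}
  \item monotonicity of $B$ (the integrated form of (\ref{B'-ineq}) for weak solutions, as in Theorem \ref{thm-weak-mono}) controls $\int_t^{t+1}\int_{N_s}\big(|\nabla \log(H-\Pten(\nu,\nu))|^2+|\mathring{\mathrm{II}}|^2\big)$;
  \item with the $O(|x|^{-3})$ bound on $\Pten(\nu,\nu)$, this gives $C^{1,\alpha}$ convergence of the rescaled $N_t$ to a round sphere, for a sequence of times;
  \item the standard expansion then yields $\liminf m_H(N_t)\leq m$.
\end{itemize}

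\textbf{Main obstacle.} The hardest step is justifying the roundness and the annulus estimate for the weak flow when the speed is $1/(H-\Pten(\nu,\nu))$ rather than $1/H$. We would handle it by blow-down: rescaled flows converge to a weak solution in $\mathbb{R}^3$ with $\sigma=0$, because $\Pten$ rescales to zero. Uniqueness of expanding spheres in $\mathbb{R}^3$ \cite{HI01} then identifies the limit.
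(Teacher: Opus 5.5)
Your proposal follows essentially the same route as the paper. You use $2$-convexity near the end to get $0\le \Pten(\nu,\nu)\le \mathrm{tr}_g\Pten = 2\,\mathrm{tr}_g\kten$, so that $\Pten$ scales away under blow-down; you then run the Huisken--Ilmanen blow-down lemma and the asymptotic comparison, with the error control coming from the integrated form of (\ref{B'-ineq}). The only organizational difference is that you use the sharper $O(|x|^{-3})$ trace decay to show explicitly that $\mP(N_t)-m_H(N_t)\to 0$ and then compare the ordinary Hawking mass, whereas the paper records only the $o(|x|^{-1})$ decay needed for the blow-down and absorbs the comparison into the $\sigma$-version of the argument in \cite[Lemma 7.2]{Dong26SigmaIMCF}.
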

\begin{proof}
	For $\lambda >0$, we introduce the rescaling
\begin{align*}
	\Omega ^{\lambda }&:= \lambda \Omega , g ^{\lambda }(y) := \lambda ^2 g(\lambda ^{-1} y), u ^{\lambda }(y):= u (\lambda ^{-1} y), E_{t}^{\lambda } := \lambda E_{t},\\
	\nu ^{\lambda }(y) &:= \nu (\lambda ^{-1} y), \mathbf{P}^{\lambda }(y) := \lambda ^{-1} \mathbf{P}(\lambda ^{-1} y), \mathbf{P}^{\lambda }(\nu ^{\lambda }, \nu ^{\lambda })(y) = \lambda ^{-1}\mathbf{P}(\nu ,\nu )(\lambda ^{-1} y).
\end{align*} 
Note that $0\leq \mathbf{P}(\nu ,\nu )(x) \leq \mathrm{tr}_{g} \mathbf{P} = 2 \mathrm{tr}_{g} \mathbf{k} = o(|x|^{-1})$ and $|\nabla \mathbf{P}| = o(|x| ^{-2})$ as $|x| \to \infty$. Using the Compactness Lemma \cite[Lemma 4.8]{Dong26SigmaIMCF}, we can argue as in the proof of \cite[Lemma 7.1]{Dong26SigmaIMCF} to conclude the same result of \cite[Lemma 7.1]{Dong26SigmaIMCF}. Finally, using the weak monotonicity formula of (\ref{B'-ineq}), we can follow the proof of \cite[Lemma 7.2]{Dong26SigmaIMCF} to conclude the proof.
\end{proof}

\begin{proof}[Proof of Theorem \ref{thm-main}]
	We can fill in the boundary $\Sigma $ by a $3$-ball to obtain a complete connected asymptotically flat $3$-manifold without boundary. Let $E_0$ be the region enclosed by $\Sigma $, and choose a weak solution of $\Pten$-IMCF starting from $E_0$. Then $E_0 = E_0 ^{+}$. The Connectedness Lemma and Theorem \ref{thm-weak-mono} therefore apply. Since $\Sigma $ is a past apparent horizon, $m_{\Pten}(\Sigma ) = \sqrt{\frac{|\Sigma |}{16 \pi }} $. The asymptotic comparison argument of \cite[Section 7]{Dong26SigmaIMCF} also applies. Thus
	\begin{align*}
		 \sqrt{\frac{|\Sigma |}{16 \pi }} = m_{\Pten}(\Sigma ) \leq \lim_{t\to \infty} m_{\Pten}(N_t) \leq m. 
	\end{align*} 

It remains to consider the equality case. For a.e. $t$, $H - \mathbf{P}(\nu ,\nu ) >0 $ a.e. on $N_{t}$, and by examining the equality conditions in the monotonicity formulas for both $A(t)$ and $B(t)$, we obtain that
\[
\Pten(\nu,\nu)=\mathrm{tr}_{N_t}\Pten=0 \text{ on } N_{t}.
\]
Hence $\Pten\equiv 0$, which also implies that $\mathbf{k} \equiv 0$ on $N_{t}$. So $\int_{N_t} |\nabla ^{N} (H_{N_t}- \mathbf{P}(\nu ,\nu ) )|^2 =0 $ for a.e. $t$. Thus $H_{N_t}- \mathbf{P}(\nu ,\nu ) = H_{N_t} = H(t)$ is a constant for a.e. $x \in N_t$. By standard elliptic theory, $N_{t}$ is smooth for a.e. $t$. Using the approximation $N_{t_i} \to N_{t}^{+}$ with $t_i \to t+$, and by lower semicontinuity and bounded convergence theorem, we conclude that $N_{t}^{+}$ is a smooth surface with constant mean curvature for all $t$. The reminder of the arguments proceeds as in \cite[Section 7]{Dong26SigmaIMCF}: if there is a jump time, i.e. $N_{t} \neq N_{t}^{+}$, then by the properties of weak solution, we know $H=\mathbf{P}(\nu ,\nu ) =0$ on a portion of $N_{t} ^{+} \setminus N_{t}$, which together with the fact that the mean curvature of $N_{t}^{+}$ is a constant implies that $N_{t}^{+}$ is a minimal surface. This contradicts with the outermost property of $\partial M$. Thus there is no jump time, which then implies that the weak $\mathbf{P}$-IMCF is a smooth solution of the standard IMCF. 
\end{proof}

\enlargethispage{3\baselineskip}
\bibliographystyle{alpha}
\bibliography{math}

\end{document}